\theoremstyle{plain}
\newtheorem{lem}{Lemma}[section]
\newtheorem{cor}[lem]{Corollary}
\newtheorem{prop}[lem]{Proposition}
\newtheorem{thm}[lem]{Theorem}
\theoremstyle{definition}
\newtheorem{defn}[lem]{Definition}
\newtheorem{ex}[lem]{Example}
\newtheorem{fact}[lem]{Fact}
\newtheorem{rem}[lem]{Remark}
\renewenvironment{proof}{\vspace{1ex}\noindent{\textbf{Proof:}}\hspace{0.5em}}
{\hfill\qed\vspace{1ex}}
\newcommand{\A}{\mathcal{A}}
\newcommand{\D}{\mathcal{D}}
\newcommand{\cat}[1]{\mathcal{#1}}
\newcommand{\catp}{\cat{P}}
\newcommand{\catf}{\cat{F}}
\newcommand{\cati}{\cat{I}}
\newcommand{\cata}{\cat{A}}
\newcommand{\catb}{\cat{B}}
\newcommand{\catac}{\cat{A}_C}
\newcommand{\catbc}{\cat{B}_C}
\newcommand{\catic}{\cat{I}_C}
\newcommand{\catpc}{\cat{P}_C}
\newcommand{\catfc}{\cat{F}_C}
\newcommand{\pd}{\operatorname{pd}}
\newcommand{\id}{\operatorname{id}}	
\newcommand{\fd}{\operatorname{fd}}
\newcommand{\pcpd}{\catpc\text{-}\pd}
\newcommand{\fcpd}{\catfc\text{-}\pd}
\newcommand{\icid}{\catic\text{-}\id}
\newcommand{\ciicid}{\mathrm{CI}\text{-}\icid}
\newcommand{\cifcpd}{\mathrm{CI}\text{-}\fcpd}
\newcommand{\ext}{\operatorname{Ext}}	
\newcommand{\rhom}{\mathbf{R}\!\operatorname{Hom}}	
\newcommand{\lotimes}{\otimes^{\mathbf{L}}}
\newcommand{\Hom}{\operatorname{Hom}}	
\newcommand{\coker}{\operatorname{Coker}}
\newcommand{\spec}{\operatorname{Spec}}
\newcommand{\tor}{\operatorname{Tor}}
\newcommand{\Ker}{\operatorname{Ker}}
\newcommand{\ideal}[1]{\mathfrak{#1}}
\newcommand{\m}{\ideal{m}}
\newcommand{\p}{\ideal{p}}
\newcommand{\bbz}{\mathbb{Z}}
\newcommand{\xra}{\xrightarrow}
\renewcommand{\geq}{\geqslant}
\renewcommand{\leq}{\leqslant}
\renewcommand{\ker}{\Ker}
\renewcommand{\hom}{\Hom}
\begin{document}

\title[Homological Dimensions and Semidualizing Complexes]{Homological Dimensions with Respect to a Semidualizing Complex}

\author{Jonathan Totushek}

\thanks{This material is based on work supported by North Dakota EPSCoR and National Science Foundation Grant EPS-0814442}

\subjclass[2010]{13D02, 13D05, 13D09}

\keywords{Auslander class, Bass class, flat dimension, injective dimension, projective dimension, semidualizing complex}

\maketitle

\begin{abstract}
	In this paper we build off of Takahashi and White's $\catpc$-projective dimension and $\catic$-injective dimension to define these dimensions for when $C$ is a semidaulizing complex. We develop the framework for these homological dimensions by establishing base change results and local-global behavior. Furthermore, we investigate how these dimensions interact with other invariants.
\end{abstract}



\section{Introduction}\label{141111:2}

Let $R$ be a commutative noetherian ring. The projective, flat, and injective dimensions of an $R$-module $M$ are now classical invariants that are important for studying M and $R$. These dimensions were later generalized for $R$-complexes by Foxby \cite{foxby:bcfm} and many useful results about dimensions for modules also hold true for complexes.

A finitely generated $R$-module $C$ is \textit{semidualizing} if $R\cong \hom_R(C,C)$ and $\ext^{\geq 1}_R (C,C)=0$.  Takahashi and White \cite{takahashi:hasm} defined, for a semidualizing $R$-module $C$, the $\catp_C$-projective and $\cati_C$-injective dimensions. The \textit{$\catpc$-projective dimension} of an $R$-module $M$ ($\pcpd_R(M)$) is the length of the shortest resolution of $M$ by modules of the form $C\otimes_R P$ where $P$ is a projective module. They define \textit{$\catic$-injective dimension} ($\icid_R(M)$) dually, and one defines the \textit{$\catfc$-projective dimension} ($\fcpd_R(M)$) similarly. We extend these constructions to the realm of $R$-complexes. Note that we work in the derived category $\D(R)$. See Section \ref{141111:1} for some background and notation on this subject.

A complex $C\in \D_{\operatorname{b}}^{\operatorname{f}}(R)$ is \textit{semidualizing} if the natural homothety morphism $\chi^R_C:R\to \rhom_R(C,C)$ is an isomorphism in $\D(R)$. To understand the $\catp_C$-projective, $\catf_C$-projective, and $\cati_C$-injective dimensions in this context, we use the following result; see Theorem \ref{141111:6} below.

\begin{thm}
\label{141111:3}
	Let $X\in \D_{\operatorname{b}}(R)$, and let $C$ be a semidualizing $R$-complex.
	\begin{enumerate}[\rm(a)]
	\item We have $\pd_R(\rhom_R(C,X))<\infty$ if and only if there exists $Y\in \D_{\operatorname{b}}(R)$ such that $\pd_R(Y)<\infty$ and $X\simeq C\lotimes_R Y$ in $\D(R)$. When these conditions are satisfied, one has $Y\simeq \rhom_R(C,X)$ and $X\in \catbc(R)$.\label{141111:3a}
	\item We have $\fd_R(\rhom_R(C,X))<\infty$ if and only if there exists $Y\in \D_{\operatorname{b}}(R)$ such that $\fd_R(Y)<\infty$ and $X\simeq C\lotimes_R Y$ in $\D(R)$. When these conditions are satisfied, one has $Y\simeq \rhom_R(C,X)$ and $X\in \catbc(R)$.\label{141111:3b}
	\item We have $\id_R(C\lotimes_R X)<\infty$ if and only if there exists $Y\in \D_{\operatorname{b}}(R)$ such that $\id_R(Y)<\infty$ and $X\simeq \rhom_R(C,Y)$ in $\D(R)$. When these conditions are satisfied, one has $Y\simeq C\lotimes_R X$ and $X\in \catac(R)$.\label{141111:3c}
	\end{enumerate}
\end{thm}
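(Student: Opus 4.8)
The plan is to prove (a) in detail and then observe that (b) follows by the same argument with "$\pd$" replaced by "$\fd$", while (c) is the formal dual obtained by swapping $C\lotimes_R-$ with $\rhom_R(C,-)$ and $\catac(R)$ with $\catbc(R)$ throughout. So I focus on \eqref{141111:3a}.

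\emph{The easy direction.} Suppose $Y\in\D_{\operatorname{b}}(R)$ satisfies $\pd_R(Y)<\infty$ and $X\simeq C\lotimes_R Y$. Since $C$ is semidualizing, a finite projective resolution of $Y$ shows $Y\in\catac(R)$ (the Auslander class contains every complex of finite projective dimension — a standard fact I would cite from the background section), hence $X=C\lotimes_R Y\in\catbc(R)$ by the usual Foxby-equivalence machinery, and moreover the unit of the adjunction gives $\rhom_R(C,X)=\rhom_R(C,C\lotimes_R Y)\simeq Y$ because $Y\in\catac(R)$. In particular $\pd_R(\rhom_R(C,X))=\pd_R(Y)<\infty$.

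\emph{The substantive direction.} Conversely, assume $\pd_R(\rhom_R(C,X))<\infty$. Set $Y:=\rhom_R(C,X)$; this is in $\D_{\operatorname{b}}(R)$ and has finite projective dimension by hypothesis, so as above $Y\in\catac(R)$. The goal is to show the natural evaluation (counit) morphism $C\lotimes_R\rhom_R(C,X)\to X$, i.e. $C\lotimes_R Y\to X$, is an isomorphism in $\D(R)$; once this is known, $X\simeq C\lotimes_R Y$ with $\pd_R(Y)<\infty$, and then $X\in\catbc(R)$ follows from $Y\in\catac(R)$ via Foxby equivalence as before. To prove the counit is an isomorphism it is enough — because $C$ is semidualizing and $\pd_R(Y)<\infty$ — to reduce to the case $Y=R$: writing $Y$ as a finite complex of finite (or at worst flat/projective) modules and inducting on the length of a projective resolution via the two-out-of-three property of isomorphisms applied to the evaluation map on the triangles in such a resolution, the claim for $Y=R$ is precisely that $C\lotimes_R\rhom_R(C,C)\simeq C$, which holds because $\rhom_R(C,C)\simeq R$ by the definition of semidualizing. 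The one point requiring care is that $\rhom_R(C,-)$ and $C\lotimes_R-$ interact well with the mapping cones in the resolution, which is automatic since both are exact functors of triangulated categories.

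\emph{Main obstacle.} The crux is the claim that finiteness of $\pd_R(\rhom_R(C,X))$ forces the counit $C\lotimes_R\rhom_R(C,X)\to X$ to be invertible — equivalently, that $X$ lies in the essential image of $C\lotimes_R-$ restricted to complexes of finite projective dimension. The subtlety is that a priori $X$ need not be in $\catbc(R)$, so one cannot simply invoke the Bass-class half of Foxby equivalence at the outset; one must instead build the isomorphism by hand from a finite projective resolution of $Y=\rhom_R(C,X)$ and only afterwards deduce membership in $\catbc(R)$. Handling the bookkeeping of shifts and the boundedness of all complexes involved (so that $\rhom_R(C,-)$ stays in $\D_{\operatorname{b}}(R)$, using $C\in\D^{\operatorname{f}}_{\operatorname{b}}(R)$) is the part that needs to be done carefully rather than waved through.
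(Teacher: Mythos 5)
Your easy direction and your setup for the converse (take $Y=\rhom_R(C,X)$, note that finite projective dimension puts $Y$ in $\catac(R)$) match the paper. The gap is in how you then get from $\rhom_R(C,X)\in\catac(R)$ to the invertibility of the counit $\xi^C_X\colon C\lotimes_R\rhom_R(C,X)\to X$. Your stated reason for not using Foxby equivalence here --- ``a priori $X$ need not be in $\catbc(R)$, so one cannot invoke the Bass-class half at the outset'' --- misreads the statement being cited: Foxby equivalence as recorded in the paper (and in Christensen's Theorem~4.6) is the biconditional ``$X\in\catbc(R)$ \emph{if and only if} $\rhom_R(C,X)\in\catac(R)$,'' and the direction you need is exactly the one that takes the Auslander-class membership of $\rhom_R(C,X)$ as input and returns $X\in\catbc(R)$ (hence $\xi^C_X$ invertible) as output. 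That single citation is the paper's entire argument for this step.

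The replacement induction you propose does not close the gap. Inducting on a finite projective resolution of $Y=\rhom_R(C,X)$ controls objects that are functorial in $Y$ --- it would prove, for instance, that $\gamma^C_Y\colon Y\to\rhom_R(C,C\lotimes_R Y)$ is an isomorphism, since both sides decompose along the triangles of the resolution --- but the target of the counit is $X$ itself, and $X$ does not decompose along a resolution of $\rhom_R(C,X)$. Your base case makes this visible: for $Y\simeq R$ the claim you verify is $C\lotimes_R\rhom_R(C,C)\simeq C$, but what you would actually need is that $\rhom_R(C,X)\simeq R$ forces $X\simeq C$, and that implication is not formal --- it is precisely the content of the Bass-class direction of Foxby equivalence (one shows $\rhom_R(C,\xi^C_X)$ is invertible via the triangle identity and then must argue that $\rhom_R(C,-)$ kills no nonzero bounded complex, e.g.\ by a $\sup$/Nakayama argument). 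Either cite the biconditional as the paper does, or supply that last reflection-of-isomorphisms step explicitly; as written, the substantive direction is not proved.
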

With this in mind, we define e.g., $\pcpd_R(X) := \sup(C) + \pd_R(\rhom_R(C,X)$; thus $\pcpd_R(X)<\infty$ if and only if $X$ satisfies the equivalent conditions of Theorem \ref{141111:3}\eqref{141111:3a}. We define $\fcpd_R(X)$ and $\icid_R(X)$ similarly. 

In Section \ref{141111:4} we develop the foundations of these homological dimensions. For instance, we establish finite flat dimension base change  (\ref{141021:1}) and local-global principles (\ref{140623:10}-\ref{140623:12}). Also in Theorem \ref{141112:1} we show how these notions naturally augment Foxby Equivalence. In Section \ref{141111:5} we establish some stability results and the following; see Theorem \ref{140503:1}.

\begin{thm}
\label{141117:1}
	Assume $R$ has a dualizing complex $D$ and let $X\in \mathcal{D}_{\rm{b}}(R)$. Then $\fcpd_R(X)<\infty$ if and only if $\mathcal{I}_{C^{\dagger}}\text{-}\id_R(X)<\infty$ where $C^{\dagger} = \rhom_R(C,D)$.
\end{thm}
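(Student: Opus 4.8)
The plan is to translate both finiteness statements into the structural descriptions provided by Theorem~\ref{141111:3} and then bridge them by a one-line computation in $\D(R)$ built from the dualizing complex. I will use three standard inputs about the dagger dual $\da{C}=\rhom_R(C,D)$ over a ring with a dualizing complex: (i) $\da{C}$ is again a semidualizing complex, with $\rhom_R(\da{C},D)\simeq C$, so that Theorem~\ref{141111:3} applies verbatim with $C$ replaced by $\da{C}$, and in particular $\icdid_R(X)<\infty$ precisely when $\id_R(\da{C}\lotimes_R X)<\infty$; (ii) $D$ has finite injective dimension, hence $D\in\catbc(R)$, and the counit of the derived tensor--Hom adjunction is an isomorphism on the Bass class, giving $C\lotimes_R\da{C}=C\lotimes_R\rhom_R(C,D)\simeq D$; and (iii) the classical correspondence induced by $D$: if $\fd_R(Y)<\infty$ then $\id_R(D\lotimes_R Y)<\infty$, and if $\id_R(Y)<\infty$ then $\fd_R(\rhom_R(D,Y))<\infty$. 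From (ii) and Hom--tensor adjunction one obtains, for every $Y\in\D_{\operatorname{b}}(R)$,
\[
\da{C}\lotimes_R(C\lotimes_R Y)\simeq D\lotimes_R Y,\qquad \rhom_R\bigl(C,\rhom_R(\da{C},Y)\bigr)\simeq\rhom_R(D,Y).
\]

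For the forward implication, suppose $\fcpd_R(X)<\infty$. By Theorem~\ref{141111:3}\eqref{141111:3b} there is $Y\in\D_{\operatorname{b}}(R)$ with $\fd_R(Y)<\infty$ and $X\simeq C\lotimes_R Y$. Applying $\da{C}\lotimes_R-$ and the first isomorphism above yields $\da{C}\lotimes_R X\simeq D\lotimes_R Y$, which has finite injective dimension by (iii). Hence $\id_R(\da{C}\lotimes_R X)<\infty$, that is, $\icdid_R(X)<\infty$.

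For the converse, suppose $\icdid_R(X)<\infty$, i.e.\ $\id_R(\da{C}\lotimes_R X)<\infty$. Applying Theorem~\ref{141111:3}\eqref{141111:3c} with $\da{C}$ in place of $C$ gives $Y\in\D_{\operatorname{b}}(R)$ with $\id_R(Y)<\infty$ and $X\simeq\rhom_R(\da{C},Y)$. The second isomorphism above then identifies $\rhom_R(C,X)\simeq\rhom_R(D,Y)$, which has finite flat dimension by (iii). Hence $\fd_R(\rhom_R(C,X))<\infty$, i.e.\ $\fcpd_R(X)<\infty$.

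The genuinely nontrivial content is not the bookkeeping but the inputs (i)--(iii): that $\da{C}$ is semidualizing with $C\lotimes_R\da{C}\simeq D$, and --- the only place the full strength of ``$D$ dualizing'' (rather than merely semidualizing) enters --- that $D\lotimes_R(-)$ and $\rhom_R(D,-)$ interchange finiteness of flat and injective dimension. Both belong to the classical theory of semidualizing and dualizing complexes, so the actual work is to arrange the adjunction isomorphisms so that Theorem~\ref{141111:3} can be invoked on each side; one can package this as the single identity $\da{C}\lotimes_R X\simeq D\lotimes_R\rhom_R(C,X)$, valid whenever $\fcpd_R(X)<\infty$, which makes the equivalence of the two dimensions transparent.
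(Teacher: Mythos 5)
Your proof is correct and follows essentially the same route as the paper's: both directions translate the finiteness hypothesis via Theorem~\ref{141111:6} into a presentation $X\simeq C\lotimes_R Y$ or $X\simeq\rhom_R(\da{C},Y)$ and then invoke the fact that $D\lotimes_R-$ and $\rhom_R(D,-)$ exchange finite flat and finite injective dimension (Fact~\ref{141121:1}). The only cosmetic difference is that you obtain the bridging isomorphisms from $C\lotimes_R\da{C}\simeq D$ (Bass-class membership of $D$) together with adjunction and associativity, whereas the paper gets them from tensor-/Hom-evaluation combined with $C\simeq C^{\dagger\dagger}$.
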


This result is key for the work in \cite{sather:fohdwrtasc}.

\section{Background}\label{141111:1}

Throughout this paper $R$ and $S$ are commutative noetherian rings with identity and $C$ is a semidualizing $R$-complex.

We work in the derived category $\D(R)$ of complexes of $R$-modules, indexed homologically (see e.g. \cite{gelfand:moha,hartshorne:rad}). A complex $X\in \D(R)$ is \textit{homologically bounded if} $\operatorname{H}_i(X) = 0$ for all $|i|\gg 0$ and $X$ is \textit{homologically finite} if $\oplus_i \operatorname{H}_i(X)$ is finitely generated. We denote by $\D_{\operatorname{b}}(R)$ and $\D_{\operatorname{b}}^{\operatorname{f}}(R)$ the full subcategories of $\D(R)$ consisting of all homologically bounded $R$-complexes and all homologically finite $R$-complexes, respectively. Isomorphisms in $\D(R)$ are identified by the symbol $\simeq$.

For $R$-complexes $X$ and $Y$, let $\inf(X)$ and $\sup(X)$  denote the infimum and supremum, respectively, of the set $\{i\in \bbz\mid \operatorname{H}_i(X) = 0\}$. Let $X\lotimes_R Y$ and $\rhom_R(X,Y)$ denote the left-derived tensor product and right-derived homomorphism complexes, respectively.

\begin{defn}
\label{140630:1}
	Let $X\in \D_+(R)$. The \textit{projective dimension} of $X$ is
	\[
		\pd_R(X) = \inf\left\{n\in \bbz \left| 
		\begin{array}{l}
			P\xra{\simeq} X \text{ where } P \text{ is a complex of projective}\\
			R\text{-modules such that } P_i = 0 \text{ for all } i>n
		\end{array}\right.
		\right\}.
	\]
	The \textit{flat dimension} ($\fd$) and \textit{injective dimension} ($\id$) are defined similarly. Let $\catp(R)$, $\catf(R)$, and $\cati(R)$ denote the full subcategories of $\D_{\operatorname{b}}(R)$ consisting of complexes of finite projective, flat, and injective dimensions, respectively.
\end{defn}

\begin{fact}[{\cite[Proposition 4.5]{avramov:hdouc}}]
\label{141121:1}
	Let $X,Y\in\D(R)$.
	\begin{enumerate}[\rm(a)]
	\item If $\id_R(Y)<\infty$, then $\fd_R(\rhom_R(X,Y))\leq \id_R(X) + \sup(Y)$.\label{141121:1a}
	\item If $\fd_R(Y)<\infty$, then $\id_R(X\lotimes_R Y)\leq \id_R(X) - \inf(Y)$.\label{141121:1b}
	\end{enumerate}
\end{fact}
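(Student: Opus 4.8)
The plan is to deduce both inequalities from three standard ingredients for homologically bounded complexes, all established earlier in the source \cite{avramov:hdouc} of the statement: the \emph{evaluation} isomorphisms, the cohomological formula for injective dimension, and the homological formula for flat dimension. Concretely, I would use that for $Z\in\D_{\operatorname{b}}(R)$ one has
\[
\id_R(Z)=\sup\{-\inf\rhom_R(R/\p,Z)\mid \p\in\spec R\}
\quad\text{and}\quad
\fd_R(Z)=\sup\{\sup(R/\p\lotimes_R Z)\mid \p\in\spec R\},
\]
together with the two evaluation isomorphisms: tensor-evaluation $\rhom_R(U,V)\lotimes_R W\xra{\simeq}\rhom_R(U,V\lotimes_R W)$, valid when $U\in\D_{\operatorname{b}}^{\operatorname{f}}(R)$ and $\fd_R(W)<\infty$, and Hom-evaluation $U\lotimes_R\rhom_R(V,W)\xra{\simeq}\rhom_R(\rhom_R(U,V),W)$, valid when $U\in\D_{\operatorname{b}}^{\operatorname{f}}(R)$ and $\id_R(W)<\infty$. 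I would also use the elementary bounds $\inf(A\lotimes_R B)\geq\inf(A)+\inf(B)$ and $\sup\rhom_R(A,B)\leq\sup(B)-\inf(A)$. In both parts I may assume the right-hand side is finite (otherwise there is nothing to prove) and $X,Y\not\simeq 0$; then $Y$, and in the nontrivial case $X$, is homologically bounded, so all the displayed infima and suprema are finite.

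For part~\eqref{141121:1b}, I would fix $\p\in\spec R$ and apply tensor-evaluation with $U=R/\p$, $V=X$, $W=Y$; this is legitimate because $R/\p\in\D_{\operatorname{b}}^{\operatorname{f}}(R)$ and $\fd_R(Y)<\infty$. It would yield $\rhom_R(R/\p,X\lotimes_R Y)\simeq\rhom_R(R/\p,X)\lotimes_R Y$, whence
\[
\inf\rhom_R(R/\p,X\lotimes_R Y)\geq\inf\rhom_R(R/\p,X)+\inf(Y)\geq-\id_R(X)+\inf(Y),
\]
the last step using the injective-dimension formula in the form $\inf\rhom_R(R/\p,X)\geq-\id_R(X)$. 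Negating and taking the supremum over $\p$ in that same formula would give $\id_R(X\lotimes_R Y)\leq\id_R(X)-\inf(Y)$, as desired.

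For part~\eqref{141121:1a}, I would fix $\p$ and apply Hom-evaluation with $U=R/\p$, $V=X$, $W=Y$; this is legitimate because $R/\p\in\D_{\operatorname{b}}^{\operatorname{f}}(R)$ and $\id_R(Y)<\infty$. It would yield $R/\p\lotimes_R\rhom_R(X,Y)\simeq\rhom_R(\rhom_R(R/\p,X),Y)$. Writing $A=\rhom_R(R/\p,X)$ and combining $\sup\rhom_R(A,Y)\leq\sup(Y)-\inf(A)$ with $\inf(A)=\inf\rhom_R(R/\p,X)\geq-\id_R(X)$, I would obtain
\[
\sup\bigl(R/\p\lotimes_R\rhom_R(X,Y)\bigr)=\sup\rhom_R(A,Y)\leq\sup(Y)+\id_R(X).
\]
Taking the supremum over $\p$ in the flat-dimension formula would then give $\fd_R(\rhom_R(X,Y))\leq\id_R(X)+\sup(Y)$.

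The only genuinely nontrivial inputs are the two dimension formulas and the precise hypotheses under which the evaluation morphisms are isomorphisms, so the main point to get right is that each evaluation morphism is invoked in the regime where one of its sufficient conditions holds: $\fd_R(Y)<\infty$ for tensor-evaluation in~\eqref{141121:1b}, and $\id_R(Y)<\infty$ for Hom-evaluation in~\eqref{141121:1a}. Everything else is bookkeeping with $\inf$ and $\sup$. I expect no serious obstacle beyond verifying these hypotheses and confirming that the cyclic test modules $R/\p$ suffice to compute the two dimensions, both of which are standard for homologically bounded complexes.
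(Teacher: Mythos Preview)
The paper does not supply a proof of this statement: it is recorded as a Fact with a direct citation to \cite[Proposition~4.5]{avramov:hdouc}, so there is no argument in the paper to compare yours against. Your proposed proof is correct and is essentially the standard one (and the one behind the cited result): reduce to the case where the right-hand side is finite so that $X$ and $Y$ are bounded, test $\id$ and $\fd$ against the cyclic modules $R/\p$, and transport the test via the appropriate evaluation isomorphism---Hom-evaluation for \eqref{141121:1a} and tensor-evaluation for \eqref{141121:1b}---each of which is available precisely under the stated hypothesis on $Y$. The remaining estimates $\inf(A\lotimes_R B)\geq\inf A+\inf B$ and $\sup\rhom_R(A,B)\leq\sup B-\inf A$ are elementary, and the boundedness of $X\lotimes_R Y$ and $\rhom_R(X,Y)$ needed to invoke the $\id$/$\fd$ formulas follows from replacing $Y$ by a bounded flat (respectively injective) resolution. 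No gaps.
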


The following result is for use in Section \ref{141111:5}.

\begin{lem}
\label{140821:1}
	Let $X\in \D_{\mathrm{b}}(R)$.
	\begin{enumerate}[\rm(a)]
	\item If $I$ is a faithfully injective $R$-module and $\id_R(\rhom_R(X,E))\leq n$, then we have $\fd_R(X)\leq n$.\label{140821:1a}
	\item If $F$ is a faithfully flat $R$-module and $\fd_R(X\lotimes_R F)\leq n$, then $\fd_R(X)\leq n$.\label{140821:1b}
	\item If $E$ is a faithfully injective $R$-module and $\id_R(X)\leq n$, then we have that $\fd_R(\rhom_R(X,E))\leq n$.\label{141007:1a}
	\item If $F$ is a faithfully flat $R$-module and $\id_R(X)\leq n$, then $\id_R(X\lotimes_R E)\leq n$.\label{141007:1b}
	\end{enumerate}
\end{lem}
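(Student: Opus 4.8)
The plan is to reduce each statement to a standard fact about finite homological dimensions over a faithfully flat or faithfully injective base by exploiting the behavior of $\rhom$ and $\lotimes$ under faithfully flat/injective modules. First I would handle parts \eqref{141007:1a} and \eqref{141007:1b}, since these are direct consequences of Fact \ref{141121:1}: if $\id_R(X)\leq n$, then $\fd_R(\rhom_R(X,E))\leq \id_R(X) + \sup(E) = n$ because $E$ is a module so $\sup(E)=0$; likewise $\id_R(X\lotimes_R F)\leq \id_R(X) - \inf(F) = n$ since $F$ is a module so $\inf(F)=0$ (here I'd note that $E$ in part \eqref{141007:1b} should read $F$, matching the hypothesis). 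So \eqref{141007:1a} and \eqref{141007:1b} are immediate; these also do not need the ``faithfully'' hypothesis, only ``injective'' resp. ``flat.''

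For parts \eqref{140821:1a} and \eqref{140821:1b}, the key point is that faithfulness lets us detect vanishing. For \eqref{140821:1b}, I would use the fact that for any $R$-complex $X$ and faithfully flat $F$ one has $\fd_R(X\lotimes_R F) = \fd_R(X)$; the inequality $\fd_R(X\lotimes_R F)\leq \fd_R(X)$ is clear, and the reverse follows because $\tor^R_i(X,-)\otimes_R F \cong \tor^R_i(X\lotimes_R F, -\otimes_R F)$ (or by a minimal-flat-resolution argument), so vanishing of the Tor modules after applying $-\lotimes_R F$ forces their vanishing before, using that $F$ is faithfully flat. Concretely: if $\fd_R(X\lotimes_R F)\leq n$, then for every $R$-module $M$ and every $i>n$ we have $\tor^R_i(X,M)\otimes_R F = 0$, hence $\tor^R_i(X,M)=0$, giving $\fd_R(X)\leq n$. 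For \eqref{140821:1a}, I would argue dually: if $I$ is faithfully injective, then $\fd_R(X) = \id_R(\rhom_R(X,I))$ — the inequality $\id_R(\rhom_R(X,I))\leq\fd_R(X)$ is part \eqref{141007:1a} (with $E$ replaced by $I$), and for the reverse one uses the adjunction/swap isomorphism $\rhom_R(M, \rhom_R(X,I)) \simeq \rhom_R(X\lotimes_R M, I)$, so that $\ext^i_R(M,\rhom_R(X,I)) \cong \hom_R(\tor^R_i(X,M), I)$; since $I$ is faithfully injective, $\hom_R(-,I)$ is faithfully exact, so vanishing of the Ext for all $M$ and all $i>n$ forces $\tor^R_i(X,M)=0$ for all such $M,i$, i.e. $\fd_R(X)\leq n$. (The statement as written says $\rhom_R(X,E)$ but names the module $I$; I'd make the notation consistent.)

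The main obstacle — really the only nontrivial point — is justifying the isomorphism $\ext^i_R(M,\rhom_R(X,I)) \cong \hom_R(\tor^R_i(X,M), I)$ at the level of the derived category with the correct homological indexing, and similarly the Tor base-change isomorphism for \eqref{140821:1b}; these are standard (swap/adjointness plus the fact that $\hom_R(-,I)$ is exact when $I$ is injective, and flat base change for Tor), but one must be careful that $X\in\D_{\mathrm b}(R)$ so the hyper-Ext and hyper-Tor spectral sequences degenerate appropriately, or equivalently that one can replace $X$ by a bounded flat (for Tor) complex. Once that bookkeeping is in place, faithfulness of $\hom_R(-,I)$ (resp. $-\otimes_R F$) closes the argument with no further computation.
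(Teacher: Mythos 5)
Your proposal is correct, and it reaches the conclusion by a slightly different route than the paper. For parts (a) and (b) the paper argues at the level of resolutions: it takes a flat resolution $F\xra{\simeq}X$, notes that $\hom_R(F,E)$ is then an injective resolution of $\rhom_R(X,E)$, truncates to conclude that $\hom_R(\coker(\partial^F_{n+1}),E)$ is injective, and invokes the module-level fact that a module $M$ is flat precisely when $\hom_R(M,E)$ is injective for $E$ faithfully injective. You instead work with the vanishing criteria for $\fd$ and $\id$ of complexes (the characterizations via $\tor^R_{i}(X,M)$ and $\ext^i_R(M,-)$ for $i>n$, i.e.\ \cite[Theorems 2.4.F and 2.4.I]{avramov:hdouc}), combined with the adjunction isomorphism $\ext^i_R(M,\rhom_R(X,I))\cong\hom_R(\tor^R_i(X,M),I)$ and faithfulness of $\hom_R(-,I)$ (resp.\ $-\otimes_R F$). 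Both arguments rest on exactly the same detection principle; yours avoids choosing and truncating a resolution, which is marginally cleaner for complexes, while the paper's is more explicit at the module level. Your observation that parts (c) and (d) follow immediately from Fact \ref{141121:1} with $\sup(E)=0$ and $\inf(F)=0$, and that faithfulness is not needed there, is also correct (the paper instead proves them ``similarly'' to (a) by truncation), and you rightly flag the $I$ versus $E$ and $E$ versus $F$ typos in the statement.
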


\begin{proof}
\eqref{140821:1a} Assume that $\id_R(\rhom_R(X,E))\leq n$ and let $F\xra{\simeq} X$ be a flat resolution.
	A standard truncation argument shows that $\hom_R(\coker(\partial^F_{n+1}),E)$ is injective. Since $E$ is faithfully injective, we also have that $\coker(\partial^F_{n+1})$ is flat. Thus $\fd_R(X)\leq n$.

	The proofs of \eqref{140821:1b}, \eqref{141007:1a}, and \eqref{141007:1b} are similar.
\end{proof}

\begin{fact}[{\cite[Lemma 4.4]{avramov:hdouc}}]
\label{141009:1}
	Let $L,M,N \in \D(R)$. Assume that $L\in \D^{\operatorname{f}}_{+}(R)$.

	The natural \textit{tensor-evaluation} morphism
	\[
		\omega_{LMN}:\rhom_R(L,M)\lotimes_R N \to \rhom_R(L,M\lotimes_R N)
	\]
	is an isomorphism when $M\in \D_{-}(R)$ and either $L\in \catp(R)$ or $N\in \catf(R)$.

	The natural \textit{Hom-evaluation} morphism
	\[
		\theta_{LMN}:L\lotimes_R\rhom_R(M,N)\to \rhom_R(\rhom_R(L,M),N)
	\]
	is an isomorphism when $M\in \D_{\operatorname{b}}(R)$ and either $L\in \catp(R)$ or $N\in \cati(R)$.
\end{fact}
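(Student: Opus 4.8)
The plan is to prove both evaluation statements by the same dévissage strategy: fix all but one variable, observe that each side is a triangulated functor in the remaining variable, check that the natural transformation is an isomorphism on the generator $R$, and then propagate the isomorphism across distinguished triangles. The only genuinely delicate point is convergence: I must choose resolutions that are K-projective/K-flat/K-injective so that the naive chain complexes compute the derived functors, and I must verify that the total Hom-complexes collapse to degreewise \emph{finite} direct sums so that the pointwise evaluation isomorphisms (valid for finitely generated projectives) globalize. This is exactly where the hypotheses $L\in\D^{\operatorname{f}}_{+}(R)$ and $M\in\D_{-}(R)$ (resp. $M\in\D_{\operatorname{b}}(R)$) enter.

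For the tensor-evaluation morphism $\omega_{LMN}$, I would first choose a resolution $F\xra{\simeq}L$ by finitely generated free modules with $F_i=0$ for $i<\inf(L)$, which exists since $L\in\D^{\operatorname{f}}_{+}(R)$; being a bounded-below complex of projectives it is K-projective, so $\rhom_R(L,-)=\Hom_R(F,-)$. In the case $L\in\catp(R)$, one may take $F$ bounded, so $L$ lies in the thick subcategory generated by $R$; since $\omega_{RMN}$ is the identity of $M\lotimes_R N$ and the class of $L$ on which $\omega$ is invertible is closed under shifts, finite direct sums, and mapping cones, an induction on the number of nonzero terms of $F$ (via its brutal truncations) finishes the argument, the base case $L\simeq R^n$ identifying both sides with $(M\lotimes_R N)^n$. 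In the case $N\in\catf(R)$, I would instead pick a bounded flat resolution $G\xra{\simeq}N$ and a bounded-above representative $M'$ of $M$ (possible as $M\in\D_{-}(R)$), so that $\omega_{LMN}$ is computed by the chain map $\Hom_R(F,M')\otimes_R G\to\Hom_R(F,M'\otimes_R G)$ evaluating in each spot. Each $F_i$ is finitely generated projective, so this map is an isomorphism in every bidegree; and since $\Hom_R(F,M')_n=\prod_i\Hom_R(F_i,M'_{i+n})$ reduces to a finite sum (because $F_i=0$ for $i<\inf(L)$ and $M'_{i+n}=0$ for $i>\sup(M')-n$), with $G$ bounded the total-complex map is a degreewise isomorphism, hence a quasi-isomorphism.

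The Hom-evaluation morphism $\theta_{LMN}$ is handled identically. When $L\in\catp(R)$ I would run the same dévissage in $L$ against the bounded finite-free complex $F\simeq L$, the base case $L\simeq R$ giving the identity of $\rhom_R(M,N)$. When $N\in\cati(R)$, I would fix a bounded (hence K-injective) injective resolution $J\xra{\simeq}N$ and a bounded representative $M'$ of $M$ (using $M\in\D_{\operatorname{b}}(R)$), and represent $\theta_{LMN}$ by the chain-level Hom-evaluation map $F\otimes_R\Hom_R(M',J)\to\Hom_R(\Hom_R(F,M'),J)$. Each $F_i$ being finitely generated projective makes this a bidegreewise isomorphism, and the same finiteness of $\Hom_R(F,M')$ in each degree (now from $F$ bounded below and $M'$ bounded), together with $J$ bounded, forces the assembled map to be an isomorphism.

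The main obstacle is precisely this last bookkeeping rather than any conceptual difficulty: one must confirm that the chosen complexes genuinely compute the derived functors, and that the products defining the total Hom-complexes truncate to finite direct sums in each homological degree so that the elementary evaluation isomorphisms for finitely generated projectives assemble into global isomorphisms. All four sub-cases hinge on the interplay between ``$L$ bounded below and homologically finite'' and ``$M$, together with the resolution of $N$, bounded''; once this finiteness is in hand, the two isomorphism statements follow formally from the module-level evaluation isomorphisms and the triangulated dévissage.
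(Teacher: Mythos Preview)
The paper does not prove this statement: it is recorded as a \texttt{fact} environment with a citation to \cite[Lemma~4.4]{avramov:hdouc} and no accompanying argument. There is therefore nothing in the paper to compare your proposal against.

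That said, your outline is the standard proof and is essentially correct. The two key ingredients you identify---(1) replacing $L$ by a bounded-below complex $F$ of finitely generated free modules, and (2) using the boundedness hypotheses on $M$ (and on the chosen resolution of $N$) to force each $\Hom_R(F,M')_n$ to be a \emph{finite} direct sum---are precisely what make the module-level evaluation isomorphisms for finitely generated projectives assemble into honest isomorphisms of total complexes. Your d\'evissage in the perfect case ($L\in\catp(R)$) via brutal truncations of a bounded finite-free $F$ is clean and avoids any need for closure under retracts. The only cosmetic point worth tightening in a full write-up is to state explicitly, in the $N\in\cati(R)$ sub-case, that the bounded complex $J$ of injectives is K-injective (being bounded above), so that $\Hom_R(-,J)$ computes $\rhom_R(-,N)$ on arbitrary complexes; you allude to this but it deserves one sentence.
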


\begin{defn}[Foxby Classes]
\label{140701:1}

\

	\begin{enumerate}[\rm(1)]
	\item The \textit{Auslander Class} with respect to $C$ is the full subcategory $\catac(R)\subseteq \D_{\mathrm{b}}(R)$ such that a complex $X$ is in $\catac(R)$ if and only if $C\lotimes_R X\in \D_{\mathrm{b}}(R)$ and the natural morphism $\gamma^C_X:X\to \rhom_R(C,C\lotimes_R X)$ is an isomorphism in $\D(R)$. \label{140701:1,1}
	\item The \textit{Bass Class} with respect to $C$ is the full subcategory $\catbc(R)\subseteq \D_{\mathrm{b}}(R)$ such that a complex $Y$ is in $\catbc(R)$ if and only if $\rhom_R(C,Y)\in\D_{\mathrm{b}}(R)$ and the natural morphism $\xi^C_Y:C\lotimes_R \rhom_R(C,Y) \to Y$ is an isomorphism in $\D(R)$. \label{140701:1,2}
	\end{enumerate}
\end{defn}

For a generalized diagramatic version of the next result, see Theorem \ref{141112:1}.

\begin{fact}[Foxby Equivalence {\cite[Theorem 4.6]{christensen:scatac}}]
\label{13032511}
	Let $X,Y\in \D_{\operatorname{b}}(R)$.
	\begin{enumerate}[(a)]
	\item One has $X\in \mathcal{A}_{C}(R)$ if and only if $C\lotimes_R X \in \mathcal{B}_{C}(R)$.\label{13032511a}
	\item One has $Y\in \mathcal{B}_{C}(R)$ if and only if $\rhom_R(C,Y)\in \A_{C}(R)$.\label{13032511b}
	\end{enumerate}
\end{fact}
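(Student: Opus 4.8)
The plan is to deduce this from the adjunction between $C\lotimes_R-$ and $\rhom_R(C,-)$ on $\D_{\mathrm{b}}(R)$, for which $\gamma^C_{(-)}$ is the unit and $\xi^C_{(-)}$ the counit. The two triangle identities
\[
\xi^C_{C\lotimes_R X}\circ(C\lotimes_R\gamma^C_X)=1_{C\lotimes_R X},\qquad
\rhom_R(C,\xi^C_Y)\circ\gamma^C_{\rhom_R(C,Y)}=1_{\rhom_R(C,Y)}
\]
are the engine of the argument. Before using them I would record one nonformal input, the \emph{reflection property} of a semidualizing complex: for $W\in\D_{\mathrm{b}}(R)$, if $C\lotimes_R W\simeq0$ then $W\simeq0$, and if $\rhom_R(C,W)\simeq0$ then $W\simeq0$. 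Equivalently, passing to mapping cones, $C\lotimes_R-$ and $\rhom_R(C,-)$ reflect isomorphisms on $\D_{\mathrm{b}}(R)$. This is where the hypothesis that $C$ is semidualizing (in particular homologically finite with $\supp_R C=\spec R$) is genuinely used.

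For the forward implication of (a) I would argue as follows. If $X\in\catac(R)$ then $\gamma^C_X$ is an isomorphism, so $\rhom_R(C,C\lotimes_R X)\simeq X\in\D_{\mathrm{b}}(R)$, giving the boundedness half of membership in $\catbc(R)$. Applying $C\lotimes_R-$ turns $\gamma^C_X$ into an isomorphism, and the first triangle identity then exhibits $\xi^C_{C\lotimes_R X}$ as the inverse of $C\lotimes_R\gamma^C_X$, hence an isomorphism; thus $C\lotimes_R X\in\catbc(R)$. The forward implication of (b) is formally dual: if $Y\in\catbc(R)$ then $\xi^C_Y$ is an isomorphism, so $C\lotimes_R\rhom_R(C,Y)\simeq Y\in\D_{\mathrm{b}}(R)$, and the second triangle identity forces $\gamma^C_{\rhom_R(C,Y)}$ to be the inverse of $\rhom_R(C,\xi^C_Y)$, hence an isomorphism, so $\rhom_R(C,Y)\in\catac(R)$. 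Neither forward implication uses the reflection property.

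The converse implications are where the reflection property enters. For (a), suppose $C\lotimes_R X\in\catbc(R)$; then $C\lotimes_R X\in\D_{\mathrm{b}}(R)$ and $\xi^C_{C\lotimes_R X}$ is an isomorphism, so the first triangle identity makes $C\lotimes_R\gamma^C_X$ an isomorphism, and reflection of $C\lotimes_R-$ upgrades this to $\gamma^C_X$ being an isomorphism, whence $X\in\catac(R)$. For (b), suppose $\rhom_R(C,Y)\in\catac(R)$; then $\rhom_R(C,Y)\in\D_{\mathrm{b}}(R)$ and $\gamma^C_{\rhom_R(C,Y)}$ is an isomorphism, so the second triangle identity makes $\rhom_R(C,\xi^C_Y)$ an isomorphism, and reflection of $\rhom_R(C,-)$ gives that $\xi^C_Y$ is an isomorphism, whence $Y\in\catbc(R)$. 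I expect the main obstacle to be precisely the reflection property: the triangle-identity bookkeeping is purely formal and the forward directions hold for any adjunction, whereas the counit $\xi^C_{C\lotimes_R X}$ being an isomorphism does not formally force the unit $\gamma^C_X$ to be an isomorphism. I would therefore isolate reflection as a separate lemma and prove it using that $C$ is homologically finite and faithful---reducing to the local case and analyzing the top-degree (co)homology of $C\lotimes_R W$ and $\rhom_R(C,W)$---rather than trying to extract it from the adjunction alone.
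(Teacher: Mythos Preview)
The paper does not prove this statement: it is recorded as a Fact with a citation to \cite[Theorem~4.6]{christensen:scatac}, so there is no in-paper proof to compare against. Your approach is correct and is the standard one. The forward implications are, as you say, purely formal consequences of the triangle identities for the adjunction $C\lotimes_R-\dashv\rhom_R(C,-)$; the converse implications genuinely require that $C\lotimes_R-$ and $\rhom_R(C,-)$ reflect isomorphisms on $\D_{\mathrm{b}}(R)$, and you correctly isolate this as the one nonformal input. Two small remarks on executing the reflection lemma: the $\rhom_R(C,-)$ case is the cleaner one to prove directly by your sketch (localize at a prime minimal in $\supp_R(W)$ so the homology of $W_\p$ is $\p R_\p$-power torsion, hence has nonzero socle, and use the identification of the top homology of $\rhom_{R_\p}(C_\p,W_\p)$ with $\hom_{R_\p}(\HH_{\inf C_\p}(C_\p),\HH_{\sup W_\p}(W_\p))$); the $C\lotimes_R-$ case is then most efficiently deduced from the $\rhom_R(C,-)$ case by applying $\rhom_R(-,E)$ for a faithfully injective $E$ and using adjunction, rather than by a parallel bottom-homology argument (which runs into the problem that $M\otimes_R N$ can vanish for $M$ finitely generated nonzero and $N$ nonzero but not finitely generated).
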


\begin{fact}[{\cite[Proposition 4.4]{christensen:scatac}}]
\label{14012006}
	Let $X\in \mathcal{D}_{\text{b}}(R)$.
	\begin{enumerate}[(a)]
		\item If $\fd_R(X)<\infty$ (e.g., $\pd_R(X)<\infty$), then $X\in \catac(R)$.
		\label{14012006a}
		\item If $\id_R(X)<\infty$, then $X\in \catbc(R)$.
		\label{14012006b}
	\end{enumerate}
\end{fact}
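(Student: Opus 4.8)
The plan is to deduce both statements from the evaluation isomorphisms of Fact~\ref{141009:1}, combined with the defining property $\chi^R_C\colon R\xra{\simeq}\rhom_R(C,C)$ of a semidualizing complex. For part~\eqref{14012006a} I would first dispose of boundedness: since $\fd_R(X)<\infty$ we have the standard estimates $\sup(C\lotimes_R X)\leq\sup(C)+\fd_R(X)<\infty$ and $\inf(C\lotimes_R X)\geq\inf(C)+\inf(X)>-\infty$, so $C\lotimes_R X\in\D_{\mathrm{b}}(R)$. It then remains to show that the natural morphism $\gamma^C_X\colon X\to\rhom_R(C,C\lotimes_R X)$ is an isomorphism.

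For that, I would invoke tensor-evaluation (Fact~\ref{141009:1}) with $L=M=C$ and $N=X$: the morphism
\[
\omega_{CCX}\colon\rhom_R(C,C)\lotimes_R X\to\rhom_R(C,C\lotimes_R X)
\]
is an isomorphism because $C\in\D_{-}(R)$ and $X\in\catf(R)$. Since $\chi^R_C$ is an isomorphism, so is $\chi^R_C\lotimes_R X\colon R\lotimes_R X\to\rhom_R(C,C)\lotimes_R X$. The key identification is that, under the canonical isomorphism $X\simeq R\lotimes_R X$, one has $\gamma^C_X=\omega_{CCX}\circ(\chi^R_C\lotimes_R X)$; that is, the unit $\gamma^C_X$ of the adjunction $\bigl(C\lotimes_R-,\,\rhom_R(C,-)\bigr)$ factors through tensor-evaluation and the homothety. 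Granting this coherence, $\gamma^C_X$ is a composite of isomorphisms, hence an isomorphism, and $X\in\catac(R)$.

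Part~\eqref{14012006b} is formally dual, using Hom-evaluation in place of tensor-evaluation. Boundedness of $\rhom_R(C,X)$ follows from the standard bounds $\sup\rhom_R(C,X)\leq\id_R(X)-\inf(C)<\infty$ and $\inf\rhom_R(C,X)\geq\inf(X)-\sup(C)>-\infty$. Applying Hom-evaluation (Fact~\ref{141009:1}) with $L=M=C$ and $N=X$, the morphism
\[
\theta_{CCX}\colon C\lotimes_R\rhom_R(C,X)\to\rhom_R(\rhom_R(C,C),X)
\]
is an isomorphism since $C\in\D_{\mathrm{b}}(R)$ and $X\in\cati(R)$; moreover $\rhom_R(\chi^R_C,X)\colon\rhom_R(\rhom_R(C,C),X)\to\rhom_R(R,X)\simeq X$ is an isomorphism because $\chi^R_C$ is. As before, the crux is to identify $\xi^C_X$ with the composite $\rhom_R(\chi^R_C,X)\circ\theta_{CCX}$, after which $\xi^C_X$ is an isomorphism and $X\in\catbc(R)$.

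The routine parts are the boundedness estimates and the verification of the evaluation hypotheses, all of which are immediate from $\fd_R(X)<\infty$, $\id_R(X)<\infty$, and $C\in\D_{\operatorname{b}}^{\operatorname{f}}(R)$. The genuine obstacle in both parts is the coherence check: showing that the structural morphisms $\gamma^C_X$ and $\xi^C_X$ coincide with the displayed composites built from $\chi^R_C$ and the canonical evaluation maps. I would handle this by naturality in $X$, reducing to the case $X=R$ (via a flat resolution in part~\eqref{14012006a} and an injective resolution in part~\eqref{14012006b}), where the assertion collapses to the tautology that $\gamma^C_R$ and $\xi^C_R$ are, respectively, $\chi^R_C$ and its dual under the evident identifications.
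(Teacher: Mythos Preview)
The paper does not supply a proof of this statement: it is recorded as a \emph{Fact} with a citation to \cite[Proposition~4.4]{christensen:scatac}, so there is no in-paper argument to compare against. Your proposal is correct and is essentially the proof given in the cited reference: one uses that $\chi^R_C$ is an isomorphism together with tensor-evaluation (for~\eqref{14012006a}) or Hom-evaluation (for~\eqref{14012006b}) to see that $\gamma^C_X$, respectively $\xi^C_X$, is an isomorphism.

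One small caution on the coherence step. Your proposed verification---``by naturality in $X$, reducing to $X=R$''---is not quite an argument as written: two natural transformations of endofunctors on $\D(R)$ that agree at $R$ need not agree everywhere. The clean way to carry out the identification $\gamma^C_X=\omega_{CCX}\circ(\chi^R_C\lotimes_R X)$ (and dually for $\xi^C_X$) is to work at the level of complexes: replace $C$ by a bounded-above complex of finitely generated projectives and $X$ by a bounded complex of flat (respectively injective) modules, and then check the equality of the two chain maps degreewise, where it reduces to the elementary module-level identity $x\mapsto(c\mapsto c\otimes x)$ viewed either directly or as the composite through $\Hom_R(C,C)\otimes_R X$. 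This is routine, but it is what actually replaces your ``reduce to $X=R$'' heuristic.
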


\section{$C$-Dimensions for Complexes}\label{141111:4}

In this section we define for the $\catp_C$-projective, $\catf_C$-projective, and $\cati_C$-injective dimensions and build their foundations.

\begin{defn}
\label{140226:1}
	Let $X\in \D_{\operatorname{b}}(R)$. 
	\begin{enumerate}[\rm(1)]
	\item The \textit{$\catpc$-projective dimension} of $X$ is defined as \label{140226:1a}
	\[
		\pcpd_R(X) = \sup(C)+\pd_R(\rhom_R(C,X)).
	\]
	\item The \textit{$\catfc$-projective dimension} of $X$ is defined as \label{140226:1b}
	\[
		\fcpd_R(X) = \sup(C)+\fd_R(\rhom_R(C,X)).
	\]
	\item The \textit{$\catic$-injective dimension} of $X$ is defined as\label{140226:1c} 
	\[
		\icid_R(X) = \sup(C)+\id_R(C\lotimes_R X).
	\]
	\end{enumerate}
	Let $\mathcal{P}_C(R)$, $\mathcal{F}_C(R)$, and $\mathcal{I}_C(R)$ denote the full subcategories of $\D_{\operatorname{b}}(R)$ of all complexes of finite $C$-projective, $C$-flat, and $C$-injective dimension, respectively.
\end{defn}

\begin{rem}\label{140623:5}
	Let $X\in \D_{\operatorname{b}}(R)$. Observe that $\sup(C)<\infty$. Hence $\pcpd_R(X)<\infty$ if and only if $\pd_R(\rhom_R(C,X))<\infty$. If $\pcpd_R(X)<\infty$, then  Fact \ref{14012006}\eqref{14012006a} implies that $\rhom_R(C,X)\in \catac(R)$ and Foxby Equivalence (\ref{13032511}) implies that $X\in \catbc(R)$. Similarly, $\fcpd_R(X)<\infty$ if and only if $\fd_R(\rhom_R(C,X))<\infty$. If $\fcpd_R(X)<\infty$, then $X\in\catbc(R)$. Also we have $\icid_R(X)<\infty$ if and only if $\id_R(C\lotimes_R X)<\infty$. Hence, if $\icid_R(X)<\infty$, then $X\in\catac(R)$.
\end{rem}

\begin{rem}
\label{140623:2}
	Let $X\in \D_{\operatorname{b}}(R)$. Note that when $C=R$ we have that $\pcpd_R(X) = \sup(R) + \pd_R(\rhom_R(R,X)) = \pd_R(X)$. Similarly in this case $\fcpd_R(X) = \fd_R(X)$ and $\icid_R(X) = \id_R(X)$.
\end{rem}

\begin{rem}\label{140623:1}
	Let $M$ be an $R$-module. When $C$ is a semidualizing $R$-module, Takahashi and White \cite[Theorem 2.11]{takahashi:hasm}, using the definition described in Section \ref{141111:2}, showed that $\pcpd_R(X) = \pd_R(\rhom_R(C,X))$. 
	Since $\sup(C) = 0$ in this case, Definition \ref{140226:1}\eqref{140226:1a} shows that our definition is consistent with the one from \cite{takahashi:hasm}. In a similar way, it can be shown that $\icid$ recovers Takahashi and White's definition in this case.
\end{rem}

The next result compares $\fcpd$ with $\pcpd$.

\begin{prop}
\label{140305:1}
	Let $X\in \mathcal{D}_{\operatorname{b}}(R)$. Then 
	\[
		\fcpd_R(X)\leq \pcpd_R(X)\leq \fcpd_R(X)+\dim(R).
	\]
	In particular if $\dim(R)<\infty$, then we have $\pcpd_R(X)<\infty$ if and only if $\fcpd_R(X)<\infty$.
\end{prop}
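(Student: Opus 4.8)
The plan is to reduce everything to the classical comparison between projective and flat dimension, applied to the complex $\rhom_R(C,X)$. Recall from Definition \ref{140226:1} that $\fcpd_R(X) = \sup(C) + \fd_R(\rhom_R(C,X))$ and $\pcpd_R(X) = \sup(C) + \pd_R(\rhom_R(C,X))$. So after cancelling the common term $\sup(C)$ (which is finite), the asserted inequalities become precisely
\[
\fd_R(\rhom_R(C,X)) \leq \pd_R(\rhom_R(C,X)) \leq \fd_R(\rhom_R(C,X)) + \dim(R).
\]
Thus it suffices to prove, for an arbitrary homologically bounded complex $Y \in \D_{\operatorname{b}}(R)$, the chain $\fd_R(Y) \leq \pd_R(Y) \leq \fd_R(Y) + \dim(R)$. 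Here I should first note that $\rhom_R(C,X)$ is indeed homologically bounded below (it lies in $\D_+(R)$, or is $-\infty$ when $X$ is not in the relevant class); the inequalities are trivially true when either side is infinite, so one may assume $\fd_R(\rhom_R(C,X)) < \infty$ for the second inequality.

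First I would dispatch the left inequality: any bounded complex of projective modules is in particular a bounded complex of flat modules, so a projective resolution $P \res Y$ with $P_i = 0$ for $i > n$ witnesses $\fd_R(Y) \leq n$; taking the infimum over such $n$ gives $\fd_R(Y) \leq \pd_R(Y)$. For the right inequality, the standard fact (due to Jensen, and in the complex setting recorded in Avramov--Foxby or Christensen's book) is that over a noetherian ring of finite Krull dimension, a module of finite flat dimension has finite projective dimension, with the bound $\pd \leq \fd + \dim R$; more precisely, if $\fd_R(Y) = n < \infty$, a truncation of a flat resolution shows the relevant cokernel module $F = \coker(\partial^F_{n+1})$ is flat, and a flat module over a noetherian ring of Krull dimension $d$ has projective dimension at most $d$, so splicing a length-$d$ projective resolution of $F$ onto the truncated flat resolution produces a projective resolution of $Y$ concentrated in degrees $\leq n + d$. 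This gives $\pd_R(Y) \leq \fd_R(Y) + \dim(R)$.

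Applying this to $Y = \rhom_R(C,X)$ and adding back $\sup(C)$ yields the displayed inequalities of the Proposition. The final "in particular" clause is then immediate: if $\dim(R) < \infty$, finiteness of $\fcpd_R(X)$ forces finiteness of $\pcpd_R(X)$ by the right inequality, and the reverse implication is the left inequality (which holds regardless of $\dim R$).

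I do not anticipate a serious obstacle here; the only point requiring care is citing the correct complex-level statement that finite flat dimension plus finite Krull dimension implies finite projective dimension with the sharp additive bound $\dim R$ — I would look this up in Avramov--Foxby's \emph{Homological dimensions of unbounded complexes} or Christensen's \emph{Gorenstein dimensions} rather than re-deriving the global-dimension bound for flat modules. I would also double-check the edge cases where $\rhom_R(C,X)$ is acyclic (so both dimensions are $-\infty$) or not homologically bounded, to confirm the inequalities hold vacuously or by convention.
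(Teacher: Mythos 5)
Your proposal is correct and follows essentially the same route as the paper: unwind the definitions to reduce both inequalities to $\fd_R(Y)\leq \pd_R(Y)\leq \fd_R(Y)+\dim(R)$ for $Y=\rhom_R(C,X)$, with the right-hand bound supplied by the Raynaud--Gruson/Jensen result that flat modules over a noetherian ring of finite Krull dimension have projective dimension at most $\dim(R)$. The paper simply cites \cite{raynaud:cpptpm} for that bound rather than spelling out the truncation-and-splice argument, but the content is identical.
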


\begin{proof}
	Assume that $\pcpd_R(X)=n<\infty$. Then 
	\[
		\fd_R(\rhom_R(C,X))\leq \pd_R(\rhom_R(C,X)) = n - \sup(C)<\infty.
	\]
	It now follows that $\fcpd_R(X)\leq n$.

	Next assume that $\dim(R)<\infty$ and $\fcpd_R(X)=n <\infty$. By \cite{raynaud:cpptpm} we have
	\[
		\pd_R(\rhom_R(C,X))\leq \fd_R(\rhom_R(C,X))+\dim(R)=n-\sup(C)+\dim(R).
	\]
	Therefore $\pcpd_R(X)\leq \dim(R) + n$. 

\end{proof}

The following three results are versions of \cite[Theorem 2.11]{takahashi:hasm} involving a semidaulizing complex.
\begin{prop}
\label{14012004}
	Let $X\in \D_{\operatorname{b}}(R)$. Then we have 
	\[
		\pcpd_R(C\lotimes_R X) = \sup(C)+\pd_R(X).
	\]
	In particular, $\pcpd_R(C\lotimes_R X)<\infty$ if and only if $\pd_R(X)<\infty$.
\end{prop}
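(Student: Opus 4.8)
The plan is to compute $\pd_R(\rhom_R(C, C\lotimes_R X))$ directly and plug into Definition \ref{140226:1}\eqref{140226:1a}. First I would observe that since $\fd_R(C\lotimes_R X)$ need not be finite a priori, I must check that the relevant complexes are homologically bounded; but in fact the cleanest route is to split into the two implications and only work with the finite side. So suppose $\pd_R(X) < \infty$. Then $\fd_R(X)<\infty$, so by Fact \ref{14012006}\eqref{14012006a} we have $X \in \catac(R)$, and hence by Foxby Equivalence (Fact \ref{13032511}\eqref{13032511a}) $C\lotimes_R X \in \catbc(R)$; in particular $\rhom_R(C, C\lotimes_R X) \in \D_{\operatorname{b}}(R)$, and the natural morphism $\gamma^C_X : X \to \rhom_R(C, C\lotimes_R X)$ is an isomorphism in $\D(R)$ by the definition of the Auslander class. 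Therefore $\pd_R(\rhom_R(C, C\lotimes_R X)) = \pd_R(X)$, and Definition \ref{140226:1}\eqref{140226:1a} gives
\[
    \pcpd_R(C\lotimes_R X) = \sup(C) + \pd_R(\rhom_R(C, C\lotimes_R X)) = \sup(C) + \pd_R(X).
\]

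Conversely, suppose $\pcpd_R(C\lotimes_R X) < \infty$, i.e. $\pd_R(\rhom_R(C, C\lotimes_R X)) < \infty$. By Remark \ref{140623:5}, finiteness of $\pcpd_R$ forces $C\lotimes_R X \in \catbc(R)$, so again $\gamma^C_X$ is an isomorphism (via Foxby Equivalence \ref{13032511}\eqref{13032511b}, $\rhom_R(C, C\lotimes_R X) \in \catac(R)$, and one checks $C\lotimes_R(-)$ applied to $\gamma^C_X$ together with $\xi^C_{C\lotimes_R X}$ shows $\gamma^C_X$ is invertible — or simply invoke that $C\lotimes_R X \in \catbc(R)$ implies $X \simeq \rhom_R(C, C\lotimes_R X) \in \catac(R)$ by \ref{13032511}\eqref{13032511b}). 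Hence $X \simeq \rhom_R(C, C\lotimes_R X)$ in $\D(R)$, so $\pd_R(X) = \pd_R(\rhom_R(C, C\lotimes_R X)) < \infty$, and the displayed equality holds as before. The final sentence of the statement is then immediate from $\sup(C) < \infty$ (noted in Remark \ref{140623:5}).

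The only real subtlety — the main obstacle — is the bookkeeping around homological boundedness and the use of the isomorphism $\gamma^C_X$: one must be careful that $\rhom_R(C, C\lotimes_R X)$ lies in $\D_{\operatorname{b}}(R)$ before speaking of its projective dimension, and that the membership $X \in \catac(R)$ (equivalently $C\lotimes_R X \in \catbc(R)$) is available on both sides of the equivalence. Once one commits to proving the two implications separately — using Fact \ref{14012006} on the "$\pd_R(X)<\infty$" side and Remark \ref{140623:5} on the "$\pcpd_R(C\lotimes_R X)<\infty$" side — everything reduces to the single identity $X \simeq \rhom_R(C, C\lotimes_R X)$, which is exactly the content of the Auslander class condition. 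I would also remark that this is the precise analogue of \cite[Theorem 2.11]{takahashi:hasm} in the derived setting, and the proof is essentially the module-case proof with "resolution length" replaced by "$\pd$ in $\D(R)$."
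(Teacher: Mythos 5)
Your proposal is correct and follows essentially the same route as the paper's proof: both directions reduce, via Fact \ref{14012006}\eqref{14012006a}, Remark \ref{140623:5}, and Foxby Equivalence, to the Auslander-class isomorphism $X\simeq\rhom_R(C,C\lotimes_R X)$, which transfers $\pd_R$ across the two sides. The only trivial discrepancy is a citation slip near the end of your converse direction (deducing $X\in\catac(R)$ from $C\lotimes_R X\in\catbc(R)$ uses Fact \ref{13032511}\eqref{13032511a}, not \eqref{13032511b}); the paper instead phrases the whole argument as ``$\pcpd_R(C\lotimes_R X)\leq n$ iff $\sup(C)+\pd_R(X)\leq n$ for each $n\in\bbz$,'' which packages the same content slightly more uniformly.
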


\begin{proof}
Let $n\in \bbz$. We prove that $\pcpd_R(C\lotimes_R X)\leq n$ if and only if $\sup(C) + \pd_R(X)\leq n$.

	For the forward implication assume that $\pcpd_R(C\lotimes_R X) \leq n$. Then by Definition \ref{140226:1}\eqref{140226:1a} we have 
	\[
		\sup(C)+\pd_R(\rhom_R(C,C\lotimes_R X)) = \pcpd_R(C\lotimes_R X) \leq n.
	\]
	Thus $\pd_R(\rhom_R(C,C\lotimes_R X))<\infty$. Fact \ref{14012006}\eqref{14012006a} implies $\rhom_R(C,C\lotimes_R X)\in \catac(R)$. By Foxby Equivalence (\ref{13032511}) we have $C\lotimes_R X\in \catbc(R)$ and $X\in\catac(R)$. Therefore we have $X\simeq \rhom_R(C,C\lotimes_R X)$ and $\pd_R(\rhom_R(C,C\lotimes_R X)) = \pd_R(X)$. Thus $\sup(C)+\pd_R(X)\leq n$.

	For the reverse implication assume that $\sup(C) + \pd_R(X)\leq n$. In particular, we have that $\pd_R(X)<\infty$. Therefore $X\in \catac(R)$ and $X\simeq \rhom_R(C,C\lotimes_R X)$. It follows that $\pd_R(X) = \pd_R(\rhom_R(C,C\lotimes_R X)$. By Definition \ref{140226:1}\eqref{140226:1a} we have 
	\[
		\pcpd_R(C\lotimes_R X) = \sup(C)+\pd_R(\rhom_R(C,C\lotimes_R X)) = \sup(C) + \pd_R(X)\leq n.
	\]
\end{proof}

The next two results are proven like Proposition \ref{14012004}.
\begin{prop}
\label{140623:3}
	Let $X\in \D_{\operatorname{b}}(R)$. Then we have 
	\[
		\fcpd_R(C\lotimes_R X)= \sup(C) + \fd_R(X).
	\]
	In particular, $\fcpd_R(C\lotimes_R X)<\infty$ if and only if $\fd_R(X)<\infty$.
\end{prop}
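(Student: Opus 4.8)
The plan is to follow the template of the proof of Proposition \ref{14012004} essentially verbatim, replacing $\pd$ by $\fd$ throughout; the only input that changes is that we invoke the flat-dimension instance of Fact \ref{14012006}\eqref{14012006a}, which applies just as well since finite flat dimension (not merely finite projective dimension) already forces membership in the Auslander class. Concretely, I would fix $n\in\bbz$ and prove that $\fcpd_R(C\lotimes_R X)\leq n$ if and only if $\sup(C)+\fd_R(X)\leq n$.

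For the forward direction, suppose $\fcpd_R(C\lotimes_R X)\leq n$. By Definition \ref{140226:1}\eqref{140226:1b} this says $\sup(C)+\fd_R(\rhom_R(C,C\lotimes_R X))\leq n$, so in particular $\fd_R(\rhom_R(C,C\lotimes_R X))<\infty$. Fact \ref{14012006}\eqref{14012006a} then gives $\rhom_R(C,C\lotimes_R X)\in\catac(R)$, and Foxby Equivalence (\ref{13032511}) yields $C\lotimes_R X\in\catbc(R)$, hence $X\in\catac(R)$. The last membership means precisely that $\gamma^C_X\colon X\to\rhom_R(C,C\lotimes_R X)$ is an isomorphism in $\D(R)$, so $\fd_R(X)=\fd_R(\rhom_R(C,C\lotimes_R X))$, and therefore $\sup(C)+\fd_R(X)\leq n$.

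For the reverse direction, suppose $\sup(C)+\fd_R(X)\leq n$; then $\fd_R(X)<\infty$, so Fact \ref{14012006}\eqref{14012006a} gives $X\in\catac(R)$, i.e.\ $X\simeq\rhom_R(C,C\lotimes_R X)$. Substituting into Definition \ref{140226:1}\eqref{140226:1b} gives $\fcpd_R(C\lotimes_R X)=\sup(C)+\fd_R(\rhom_R(C,C\lotimes_R X))=\sup(C)+\fd_R(X)\leq n$. Since $n$ was arbitrary, the two quantities agree, and the final assertion is immediate because $\sup(C)<\infty$. I do not expect a genuine obstacle here beyond bookkeeping; the single point worth flagging is that both implications rest on the identification $X\simeq\rhom_R(C,C\lotimes_R X)$ coming from $X\in\catac(R)$, with that membership supplied by Foxby Equivalence in the forward direction and directly by finite flat dimension in the reverse direction.
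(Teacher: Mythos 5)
Your proposal is correct and matches the paper, which explicitly states that this proposition is proven like Proposition \ref{14012004}; your substitution of $\fd$ for $\pd$ and your appeal to the flat-dimension case of Fact \ref{14012006}\eqref{14012006a} are exactly the intended modifications.
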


\begin{prop}
\label{140623:4}
	Let $X\in \D_{\operatorname{b}}(R)$. Then we have 
	\[
		\icid_R(\rhom_R(C,X))= \sup(C)+\id_{R}(C\lotimes_R X).
	\]
	In particular, $\icid_R(\rhom_R(C,X))<\infty$ if and only if $\id_R(X)<\infty$.
\end{prop}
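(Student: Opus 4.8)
The plan is to follow the proof of Proposition~\ref{14012004} verbatim in form, proving for each $n \in \bbz$ that $\icid_R(\rhom_R(C,X)) \leq n$ if and only if $\sup(C) + \id_R(X) \leq n$. The first move is to unwind Definition~\ref{140226:1}\eqref{140226:1c} applied to $\rhom_R(C,X)$, which gives
\[
\icid_R(\rhom_R(C,X)) = \sup(C) + \id_R\bigl(C \lotimes_R \rhom_R(C,X)\bigr).
\]
So the entire content of the proposition is the computation of $\id_R\bigl(C \lotimes_R \rhom_R(C,X)\bigr)$, i.e.\ of the injective dimension of the $C\lotimes_R(-)$ complex that the definition of $\icid$ produces.

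The key step is the Bass-class collapse $C \lotimes_R \rhom_R(C,X) \simeq X$. For the forward implication, if $\icid_R(\rhom_R(C,X)) \leq n$ then $\id_R\bigl(C \lotimes_R \rhom_R(C,X)\bigr) < \infty$, so Fact~\ref{14012006}\eqref{14012006b} places $C \lotimes_R \rhom_R(C,X)$ in $\catbc(R)$; Foxby Equivalence~(\ref{13032511}\eqref{13032511a}) then yields $\rhom_R(C,X) \in \catac(R)$, and Foxby Equivalence~(\ref{13032511}\eqref{13032511b}) yields $X \in \catbc(R)$. For the reverse implication I would instead start from $\id_R(X) < \infty$ and invoke Fact~\ref{14012006}\eqref{14012006b} directly to get $X \in \catbc(R)$. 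In either case, membership $X \in \catbc(R)$ makes the natural morphism $\xi^C_X : C \lotimes_R \rhom_R(C,X) \to X$ of Definition~\ref{140701:1}\eqref{140701:1,2} an isomorphism in $\D(R)$, whence $\id_R\bigl(C \lotimes_R \rhom_R(C,X)\bigr) = \id_R(X)$.

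Substituting this back into the displayed formula gives $\icid_R(\rhom_R(C,X)) = \sup(C) + \id_R(X)$, so finiteness of the left side is equivalent to $\id_R(X) < \infty$, which is the asserted ``in particular''. I expect the main (and only nonformal) obstacle to be the forward direction, where the hypothesis controls $C \lotimes_R \rhom_R(C,X)$ rather than $X$ and one must transport it through both halves of Foxby Equivalence to reach $X \in \catbc(R)$; everything after the isomorphism $\xi^C_X$ is formal. Finally I would record that the injective dimension surviving the collapse is that of $X$, since $\xi^C_X$ identifies $C \lotimes_R \rhom_R(C,X)$ with $X$ and not with $C \lotimes_R X$ (for instance when $X \simeq C$ one has $\rhom_R(C,C) \simeq R$, so the left side is $\icid_R(R) = \sup(C)+\id_R(C)$); thus the right-hand side of the stated equality is $\sup(C) + \id_R(X)$, in agreement with the ``in particular'' clause.
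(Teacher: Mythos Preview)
Your proposal is correct and takes essentially the same approach as the paper, which simply says the result is ``proven like Proposition~\ref{14012004}''; your dualization via $\catbc(R)$ and the evaluation morphism $\xi^C_X$ is exactly the intended translation. You are also right that the displayed equality in the statement contains a typo: the argument yields $\sup(C)+\id_R(X)$, consistent with the ``in particular'' clause, not $\sup(C)+\id_R(C\lotimes_R X)$.
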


Next, we have Theorem \ref{141111:3} from the introduction.

\begin{thm}
\label{141111:6}
	Let $X\in \D_{\operatorname{b}}(R)$.
	\begin{enumerate}[\rm(a)]
	\item We have $\pcpd_R(X)<\infty$ if and only if there exists $Y\in \D_{\operatorname{b}}(R)$ such that $\pd_R(Y) <\infty$ and $X\simeq C\lotimes_R Y$. When these conditions are satisfied, one has $Y\simeq \rhom_R(C,X)$ and $X\in \catbc(R)$.\label{141111:6a}
	\item We have $\fcpd_R(X)<\infty$ if and only if there exists $Y\in \D_{\operatorname{b}}(R)$ such that $\fd_R(Y) <\infty$ and $X\simeq C\lotimes_R Y$. When these conditions are satisfied, one has $Y\simeq \rhom_R(C,X)$ and $X\in \catbc(R)$.\label{141111:6b}
	\item We have $\icid_R(X)<\infty$ if and only if there exists $Y\in \D_{\operatorname{b}}(R)$ such that $\id_R(Y)<\infty$ and $X\simeq \rhom_R(C,Y)$. When these conditions are satisfied, one has $Y\simeq C\lotimes_R X$ and $X\in \catac(R)$.\label{141111:6c}
	\end{enumerate}
\end{thm}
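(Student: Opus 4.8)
The plan is to derive all three parts directly from the definitions together with Remark \ref{140623:5}, Fact \ref{14012006}, Foxby Equivalence (\ref{13032511}), and the defining isomorphisms $\gamma^C_{(-)}$ and $\xi^C_{(-)}$ of the Auslander and Bass classes; in effect the reverse implications of (a) and (b) are repackagings of Propositions \ref{14012004} and \ref{140623:3}, and (c) is dual. Since the three arguments run in parallel, I would write out (a) in detail and then indicate the modifications needed for (b) and (c).

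\emph{Forward direction of (a).} Assume $\pcpd_R(X)<\infty$. By Remark \ref{140623:5} this is equivalent to $\pd_R(\rhom_R(C,X))<\infty$, and it forces $X\in\catbc(R)$; in particular $\rhom_R(C,X)\in\D_{\operatorname{b}}(R)$. Put $Y:=\rhom_R(C,X)$. Then $\pd_R(Y)<\infty$, and the Bass-class isomorphism $\xi^C_X\colon C\lotimes_R\rhom_R(C,X)\to X$ yields $X\simeq C\lotimes_R Y$. This already records the supplementary assertions $Y\simeq\rhom_R(C,X)$ and $X\in\catbc(R)$.

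\emph{Reverse direction of (a).} Let $Y\in\D_{\operatorname{b}}(R)$ with $\pd_R(Y)<\infty$ and $X\simeq C\lotimes_R Y$. By Fact \ref{14012006}\eqref{14012006a} we have $Y\in\catac(R)$, so $\gamma^C_Y\colon Y\to\rhom_R(C,C\lotimes_R Y)$ is an isomorphism; combining with $X\simeq C\lotimes_R Y$ gives $Y\simeq\rhom_R(C,X)$, hence $\pd_R(\rhom_R(C,X))=\pd_R(Y)<\infty$, and since $\sup(C)<\infty$ we conclude $\pcpd_R(X)<\infty$. Applying Foxby Equivalence (\ref{13032511}) to $Y\in\catac(R)$ gives $X\simeq C\lotimes_R Y\in\catbc(R)$.

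\emph{Parts (b) and (c).} Part (b) is obtained from (a) verbatim with $\pd$ replaced by $\fd$ throughout, Fact \ref{14012006}\eqref{14012006a} still supplying $Y\in\catac(R)$ when $\fd_R(Y)<\infty$. For (c), Remark \ref{140623:5} gives that $\icid_R(X)<\infty$ is equivalent to $\id_R(C\lotimes_R X)<\infty$ and forces $X\in\catac(R)$; in the forward direction put $Y:=C\lotimes_R X$ and use $\gamma^C_X\colon X\to\rhom_R(C,C\lotimes_R X)$ to get $X\simeq\rhom_R(C,Y)$ with $\id_R(Y)<\infty$, while in the reverse direction $\id_R(Y)<\infty$ gives $Y\in\catbc(R)$ by Fact \ref{14012006}\eqref{14012006b}, Foxby Equivalence (\ref{13032511}) then gives $X\simeq\rhom_R(C,Y)\in\catac(R)$, and the Bass-class isomorphism $\xi^C_Y\colon C\lotimes_R\rhom_R(C,Y)\to Y$ gives $C\lotimes_R X\simeq Y$, so $\id_R(C\lotimes_R X)=\id_R(Y)<\infty$, i.e.\ $\icid_R(X)<\infty$. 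I do not expect a genuine obstacle: the substance is contained in Remark \ref{140623:5} and Foxby Equivalence, and the rest is bookkeeping. The only points needing care are verifying that the witness complex $Y$ is homologically bounded and, in each implication, invoking the half of Foxby Equivalence whose hypothesis is actually at hand --- the Auslander class on the $\rhom_R(C,-)$ side and the Bass class on the $C\lotimes_R-$ side.
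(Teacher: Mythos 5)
Your proposal is correct and follows essentially the same route as the paper's proof: identify the witness $Y=\rhom_R(C,X)$ (resp.\ $Y=C\lotimes_R X$) via Fact \ref{14012006} and Foxby Equivalence in one direction, and use $Y\in\catac(R)$ (resp.\ $Y\in\catbc(R)$) to recover $Y\simeq\rhom_R(C,X)$ in the other. Routing the forward implication through Remark \ref{140623:5} is only a cosmetic difference, since that remark packages exactly the same application of Fact \ref{14012006} and Foxby Equivalence that the paper invokes directly.
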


\begin{proof}
	\eqref{141111:6a} For the forward implication assume that $\pcpd_R(X) <\infty$. Then by Definition \ref{140226:1}\eqref{140226:1a} we have $\pd_R(\rhom_R(C,X)) = \pcpd_R(X) - \sup(C)<\infty$. Fact \ref{14012006}\eqref{14012006a} implies that $\rhom_R(C,X)\in\catac(R)$ and Foxby Equivalence implies that $X\in \catbc(R)$. Thus $X\simeq C\lotimes_R \rhom_R(C,X)\simeq C\lotimes_R Y$ with $Y= \rhom_R(C,X)$.

	For the reverse implication assume that there exists a $Y\in \D_{\operatorname{b}}(R)$ such that $\pd_R(Y) <\infty$ and $X\simeq C\lotimes_R Y$. Then Fact \ref{14012006}\eqref{14012006a} implies that $Y\in \catac(R)$ and hence we have
	\[
		Y\simeq \rhom_R(C,C\lotimes_R Y) \simeq \rhom_R(C,X).\
	\]
	It now follows by Definition \ref{140226:1}\eqref{140226:1a} that $\pcpd_R(X)<\infty$.

	Parts \eqref{141111:6b} and \eqref{141111:6c} are proven similarly.
\end{proof}

The previous results give rise to a generalized Foxby Equivalence.

\begin{thm}[Foxby Equivalence]
\label{141112:1}
	There is a commutative diagram
\begin{center}
\begin{tikzpicture}
	\matrix[matrix of math nodes,row sep=2em, column sep=3em,
	text height=1.5ex, text depth=0.25ex]
	{ |[name=M]| \catic(R) & &  |[name=N]| \cati(R)\\
	 |[name=C]| \catac(R) & & |[name=D]| \catbc(R)\\
	 |[name=G]| \mathcal{F}(R) & & |[name=H]| \mathcal{F}_C(R)\\
	|[name=K]| \catp(R) & & |[name=L]| \catpc(R)\\};
	\path[->,font=\scriptsize]
	([yshift= 3pt]M.east) edge ([yshift= 3pt]N.west)
	([yshift= 3pt]C.east) edge node[above]{$C\lotimes_R -$} ([yshift= 3pt]D.west)
	([yshift= 3pt]G.east) edge ([yshift= 3pt]H.west)
	([yshift= 3pt]K.east) edge ([yshift= 3pt]L.west);
	\path[<-,font=\scriptsize]
	([yshift= -3pt]M.east) edge ([yshift= -3pt]N.west)
	([yshift= -3pt]C.east) edge node[below]{$\rhom_R(C,-)$} ([yshift= -3pt]D.west)
	([yshift= -3pt]G.east) edge ([yshift= -3pt]H.west)
	([yshift= -3pt]K.east) edge ([yshift= -3pt]L.west);
	\path[right hook->] 
	(M) edge (C)
	(N) edge (D);

	\path[left hook->]
	(K) edge (G)
	(G) edge (C)
	(L) edge (H)
	(H) edge (D);
\end{tikzpicture}
\end{center}
where the vertical arrows are full embeddings, and the unlabeled horizontal arrows are quasi-inverse equivalences of categories.
\end{thm}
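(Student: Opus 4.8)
The plan is to assemble the diagram from three pieces: the classical Foxby Equivalence (Fact \ref{13032511}), the characterizations of the $C$-dimensions in Theorem \ref{141111:6}, and the elementary observation that the functors $C\lotimes_R-$ and $\rhom_R(C,-)$ restrict appropriately to the finite-dimension subcategories. First I would recall that, by Fact \ref{13032511}, the functors $C\lotimes_R-\colon\catac(R)\to\catbc(R)$ and $\rhom_R(C,-)\colon\catbc(R)\to\catac(R)$ are quasi-inverse equivalences; the unit and counit are exactly the isomorphisms $\gamma^C_X$ and $\xi^C_Y$ of Definition \ref{140701:1}. This gives the middle horizontal row. The vertical full embeddings $\catp(R)\hookrightarrow\catf(R)\hookrightarrow\catac(R)$ on the left are immediate from Fact \ref{14012006}\eqref{14012006a}, and $\cati(R)\hookrightarrow\catbc(R)$ on the right from Fact \ref{14012006}\eqref{14012006b}; the embeddings $\catpc(R)\hookrightarrow\catfc(R)\hookrightarrow\catbc(R)$ and $\catic(R)\hookrightarrow\catac(R)$ follow from Remark \ref{140623:5}.

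Next I would check that the equivalence $C\lotimes_R-$ carries each left-hand subcategory \emph{onto} the corresponding right-hand subcategory, and that $\rhom_R(C,-)$ does the reverse. For the bottom two rows: if $X\in\catp(R)$ (resp.\ $\catf(R)$) then $\pd_R(X)<\infty$ (resp.\ $\fd_R(X)<\infty$), so by Theorem \ref{141111:6}\eqref{141111:6a} (resp.\ \eqref{141111:6b}) the complex $C\lotimes_R X$ lies in $\catpc(R)$ (resp.\ $\catfc(R)$); conversely, if $Y\in\catpc(R)$ then Theorem \ref{141111:6}\eqref{141111:6a} produces $X\simeq\rhom_R(C,Y)$ with $\pd_R(X)<\infty$, and $Y\simeq C\lotimes_R X$ recovers $Y$, so $\rhom_R(C,-)$ maps $\catpc(R)$ into $\catp(R)$ and is inverse to $C\lotimes_R-$ there; the flat case is identical using \eqref{141111:6b}. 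For the top row, the relevant functors are reversed: $\rhom_R(C,-)$ goes from $\catic(R)$ to $\cati(R)$ and $C\lotimes_R-$ back. If $X\in\catic(R)$ then $\id_R(C\lotimes_R X)<\infty$, so by Theorem \ref{141111:6}\eqref{141111:6c} applied with $Y=C\lotimes_R X$ we get $X\simeq\rhom_R(C,Y)$ and $\id_R(Y)<\infty$, i.e.\ $C\lotimes_R X\in\cati(R)$; conversely, if $Y\in\cati(R)$ then $Y\in\catbc(R)$ by Fact \ref{14012006}\eqref{14012006b}, and Theorem \ref{141111:6}\eqref{141111:6c} gives $\icid_R(\rhom_R(C,Y))<\infty$, so $\rhom_R(C,Y)\in\catic(R)$, with $\xi^C_Y$ and $\gamma^C_{\rhom_R(C,Y)}$ providing mutual inverses.

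Finally, commutativity of the diagram is automatic once the arrows are identified: every horizontal arrow is a restriction of either $C\lotimes_R-$ or $\rhom_R(C,-)$, and every vertical arrow is an inclusion of full subcategories, so the squares commute on the nose (not merely up to natural isomorphism). The only point requiring a word of care is that the upper and lower halves use the functors in "opposite orientations" relative to the picture — in the bottom half $C\lotimes_R-$ points right and $\rhom_R(C,-)$ points left, while in the top half it is the reverse — so I would state explicitly which functor labels which arrow, matching the labels already on the middle row. The main obstacle, such as it is, is bookkeeping: making sure the surjectivity-onto-subcategory claims are invoked in the correct direction and that Theorem \ref{141111:6} is applied with the right choice of $Y$ in each case; there is no genuine analytic difficulty, since all the hard content is already packaged in Fact \ref{13032511}, Fact \ref{14012006}, and Theorem \ref{141111:6}.
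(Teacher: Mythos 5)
Your proposal is correct and assembles the diagram from exactly the ingredients the paper intends --- Fact \ref{13032511} for the middle equivalence, Fact \ref{14012006} and Remark \ref{140623:5} for the vertical embeddings, and Theorem \ref{141111:6} for restricting the equivalence to each row --- which is precisely why the paper gives no separate argument beyond the remark that ``the previous results give rise to'' the theorem. One slip worth fixing: your sentence asserting that in the top row ``the relevant functors are reversed: $\rhom_R(C,-)$ goes from $\catic(R)$ to $\cati(R)$ and $C\lotimes_R-$ back'' contradicts your own correct verification two lines later that $C\lotimes_R-$ carries $\catic(R)$ into $\cati(R)$ and $\rhom_R(C,-)$ carries $\cati(R)$ back; the functors point the same way as in the middle row, and what switches sides between the top and bottom halves is only the $C$-decorated category, not the orientation of the functors.
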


The next result shows how $\pcpd$ and $\fcpd$ transfer along a ring homomorphism of finite flat dimension. Note that if $\varphi: R\to S$ is a ring homomorphism of finite flat dimension, then $C\lotimes_R S$ is a semidualizing $S$-complex by \cite[Theorem 5.6]{christensen:scatac} and \cite[Theorem II(a)]{frankild:rrhffd}.

\begin{prop}
\label{141021:1}
	Let $\varphi:R\to S$ be a ring homomorphism of finite flat dimension and $X\in\D_{\operatorname{b}}(R)$. Then one has
	\begin{enumerate}[\rm(a)]
	\item $\catp_{C\lotimes_R S}\text{-}\pd_{S}(X\lotimes_R S)-\sup(C\lotimes_R S)\leq \pcpd_R(X)-\sup(C)$\label{141021:1c},
	\item $\catf_{C\lotimes_R S}\text{-}\pd_{S}(X\lotimes_R S)-\sup(C\lotimes_R S) \leq \fcpd_R(X) - \sup(C)$\label{141021:1d},
	\item $\catp_{C\lotimes_R S}\text{-}\pd_{S}(X\lotimes_R S)\leq \pcpd_R(X)$\label{141021:1a}, and
	\item $\catf_{C\lotimes_R S}\text{-}\pd_{S}(X\lotimes_R S)\leq \fcpd_R(X)$.\label{141021:1b}
	\end{enumerate}
	Equality holds when $\varphi$ is faithfully flat.
\end{prop}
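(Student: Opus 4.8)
The plan is to reduce all four inequalities to a single base-change isomorphism. Recall (as noted before the statement) that $C\lotimes_R S$ is a semidualizing $S$-complex, so the invariants on the left-hand sides are defined via Definition~\ref{140226:1} over $S$. The key point is the isomorphism
\[
\rhom_S(C\lotimes_R S,\, X\lotimes_R S)\ \simeq\ \rhom_R(C,X)\lotimes_R S
\]
in $\D(S)$. To obtain it, first use the derived adjunction between extension and restriction of scalars along $\varphi$ to identify $\rhom_S(C\lotimes_R S, X\lotimes_R S)\simeq \rhom_R(C,\,X\lotimes_R S)$, and then apply the tensor-evaluation isomorphism $\omega_{CXS}$ of Fact~\ref{141009:1}. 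Its hypotheses hold here because $C$ is semidualizing, hence $C\in\D^{\operatorname{f}}_{+}(R)$, because $X\in\D_{\operatorname{b}}(R)\subseteq\D_{-}(R)$, and because $S\in\catf(R)$ --- this last point being exactly where the hypothesis $\fd_R(S)<\infty$ is used.

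Granting the displayed isomorphism, Definition~\ref{140226:1} turns (a) and (b) into the estimates $\pd_S\bigl(\rhom_R(C,X)\lotimes_R S\bigr)\leq \pd_R(\rhom_R(C,X))$ and $\fd_S\bigl(\rhom_R(C,X)\lotimes_R S\bigr)\leq \fd_R(\rhom_R(C,X))$. If the right-hand side is infinite there is nothing to prove; if it is finite, then $\rhom_R(C,X)$ lies in $\catp(R)$, resp.\ $\catf(R)$, so a bounded projective, resp.\ flat, resolution $P\res\rhom_R(C,X)$ with $P_i=0$ for $i>n$ yields $P\otimes_R S$, a bounded complex of projective, resp.\ flat, $S$-modules representing $\rhom_R(C,X)\lotimes_R S$ and concentrated in degrees $\leq n$, which gives the estimate. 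Parts (c) and (d) then follow from (a) and (b) together with the inequality $\sup(C\lotimes_R S)\leq\sup(C)+\sup(S)=\sup(C)$, since $\catp_{C\lotimes_R S}\text{-}\pd_S(X\lotimes_R S)=\sup(C\lotimes_R S)+\pd_S\bigl(\rhom_R(C,X)\lotimes_R S\bigr)$ and similarly with $\catf$ in place of $\catp$.

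For the final assertion assume $\varphi$ is faithfully flat, so $S$ is a faithfully flat $R$-module. Then $\operatorname{H}_i(C\lotimes_R S)\cong\operatorname{H}_i(C)\otimes_R S$ is nonzero exactly when $\operatorname{H}_i(C)\neq 0$, whence $\sup(C\lotimes_R S)=\sup(C)$; so it suffices to prove $\pd_R(N)=\pd_S(N\lotimes_R S)$ and $\fd_R(N)=\fd_S(N\lotimes_R S)$ for $N:=\rhom_R(C,X)\in\D_{\operatorname{b}}(R)$. The flat case is quick: every flat $S$-module is $R$-flat since $S$ is $R$-flat, so $\fd_R(N\lotimes_R S)\leq\fd_S(N\lotimes_R S)$; combining this with $\fd_R(N)\leq\fd_R(N\lotimes_R S)$ from Lemma~\ref{140821:1}\eqref{140821:1b} and with the base-change inequality already shown forces all these quantities to agree. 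The projective case is the part I expect to be the main obstacle. From the flat case $\fd_R(N)<\infty$; one then needs the reverse inequality $\pd_R(N)\leq\pd_S(N\lotimes_R S)$, that is, faithfully flat descent of projective dimension for a bounded (but possibly not homologically finite) complex. I would approach this by localizing: since $\spec\varphi$ is surjective, each prime $\p$ of $R$ is dominated by a prime $\q$ of $S$, the map $R_\p\to S_\q$ is faithfully flat local (so in particular $R_\p$ has finite Krull dimension), and one reduces to the flat-local case $\pd_{R_\p}(N_\p)=\pd_{S_\q}\bigl((N\lotimes_R S)_\q\bigr)$ together with the local-global principle $\pd_R(N)=\sup_{\p}\pd_{R_\p}(N_\p)$. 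Carrying this localization/descent argument out carefully --- in particular checking the flat-local equality for $\pd$ at the level of bounded complexes --- is where the bookkeeping in this last step lies.
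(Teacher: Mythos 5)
Your four inequalities and your treatment of the flat-dimension equality in the faithfully flat case are correct and essentially the paper's argument: the same chain of adjunction plus tensor-evaluation identifies $\rhom_S(C\lotimes_R S,X\lotimes_R S)$ with $\rhom_R(C,X)\lotimes_R S$, base change along bounded projective (resp.\ flat) resolutions gives the estimates, and $\sup(C\lotimes_R S)\leq\sup(C)$ converts (a),(b) into (c),(d). The problem is the step you yourself flagged as the main obstacle: faithfully flat descent of \emph{projective} dimension for $N=\rhom_R(C,X)$. Your proposed route --- localize, prove a flat-local equality, and reassemble via $\pd_R(N)=\sup_{\p}\pd_{R_\p}(N_\p)$ --- does not work, because that local-global principle for $\pd$ is only valid for homologically \emph{finite} complexes (this is \cite[Proposition 5.3P]{avramov:hdouc}, and it is exactly why Proposition~\ref{140623:10} of this paper requires $X\in\D_{\operatorname{b}}^{\operatorname{f}}(R)$ while the flat and injective analogues do not). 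Here $X$ is only assumed to lie in $\D_{\operatorname{b}}(R)$, so $N$ need not be homologically finite, and the sup of the local projective dimensions can be strictly smaller than $\pd_R(N)$ (Nagata-type examples give a module with all localizations of finite projective dimension but infinite global projective dimension). Even the flat-local input you want is delicate: over a noetherian local ring a flat module need not be projective, so $\pd$ and $\fd$ do not coincide pointwise for non-finite modules.

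The paper closes this gap with a genuinely different tool: faithfully flat descent of projectivity itself. If $P$ is an $R$-module with $P\otimes_R S$ projective over $S$ and $S$ faithfully flat, then $P$ is projective over $R$ (\cite{raynaud:cpptpm}, completed by \cite[Theorem 9.6]{perry:ffdpm}). Applying this to the cokernel $\coker(\partial^{P}_{m+1})$ of a projective resolution $P\res N$ --- whose base change is the corresponding cokernel for $P\otimes_R S\res N\lotimes_R S$ --- yields $\pd_R(N)\leq\pd_S(N\lotimes_R S)$ directly, with no localization. You should replace your localization argument with this descent theorem; the rest of your write-up can stand.
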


\begin{proof}
	\eqref{141021:1c} and \eqref{141021:1a} Assume first that $\pcpd_R(X) - \sup(C) = n<\infty$. Then $\pd_R(\rhom_R(C,X))= n$ and hence by base change we have 
	\[
		\pd_S(\rhom_R(C,X)\lotimes_R S)\leq \pd_R(\rhom_R(C,X)) = n.
	\]
	Observe by tensor-evaluation (\ref{141009:1}) and Hom-tensor adjointness, there are isomorphisms
	\begin{align*}
		\rhom_R(C,X)\lotimes_R S &\simeq \rhom_R(C,X\lotimes_R S)\\
		&\simeq \rhom_R(C,\rhom_S(S,X\lotimes_R S))\\
		&\simeq \rhom_S(C\lotimes_R S,X\lotimes_R S).
	\end{align*}
	Therefore $\pd_S(\rhom_S(C\lotimes_R S,X\lotimes_R S))\leq n$. Thus we have
	\[
		\catp_{C\lotimes_R S}\text{-}\pd_S(X\lotimes_R S) - \sup(C\lotimes_R S) \leq n = \pcpd_R(X) - \sup(C)
	\]
	that is, the inequality in \eqref{141021:1c} holds.
	
	Observe that since $\fd_R(S)<\infty$, we have $S\in \catac(R)$ and hence $\sup(C\lotimes_R S) \leq \sup(C)$ by \cite[Proposition 4.8(a)]{christensen:scatac}. Hence the inequality in \eqref{141021:1a} follows from part \eqref{141021:1c}.

	Now assume that $\varphi$ is faithfully flat. Therefore one has that $\sup(C\lotimes_R S) = \sup(C)$. Hence it suffices to show that $\catp_{C\lotimes_R S}\text{-}\pd_R(X\lotimes_R S) \geq \pcpd_R(X)$. Assume that $\catp_{C\lotimes_R S}\text{-}\pd_R(X\lotimes_R S) = n<\infty$. Then 
	\[
		\pd_S(\rhom_R(C,X)\lotimes_R S)=\pd_S(\rhom_S(C\lotimes_RS,X\lotimes_R S)) = n-\sup(C\lotimes_R S).
	\]
	Therefore we have $\pd_S(\rhom_R(C,X)\lotimes_R S)\leq n - \sup(C)$. Observe that if $P$ is an $R$-module such that $P\otimes_R S$ is projective over $S$, then $P$ is projective over $R$ by \cite[Theorem 9.6]{perry:ffdpm} and \cite{raynaud:cpptpm}. A standard truncation argument thus shows that 
	\[
		\pd_R(\rhom_R(C,X))\leq \pd_S(\rhom_R(C,X)\lotimes_R S) = n-\sup(C)
	\]
	as desired. 

	Parts \eqref{141021:1b} and \eqref{141021:1d} are proven similarly.
\end{proof}

\begin{cor}
\label{140618:1}
	Let $X\in \D_{\operatorname{b}}(R)$, and let $U\subset R$ be a multiplicatively closed subset. Then there are equalities
	\begin{enumerate}[\rm (a)]
	\item $\mathcal{P}_{U^{-1}C}\text{-}\pd_{U^{-1} R}(U^{-1}X)\leq \pcpd_R(X)$, \label{140618:1a}
	\item $\mathcal{F}_{U^{-1}C}\text{-}\pd_{U^{-1}R}(U^{-1}X)\leq \fcpd_R(X)$, \label{140618:1b}
	\item $\mathcal{I}_{U^{-1}C}\text{-}\id_{U^{-1}R}(U^{-1}X)\leq \icid_R(X)$, \label{140618:1c}
	\item $\mathcal{P}_{U^{-1}C}\text{-}\pd_{U^{-1} R}(U^{-1}X)-\sup(U^{-1}C) \leq \pcpd_R(X) - \sup(C)$, \label{140618:1d}
	\item $\mathcal{F}_{U^{-1}C}\text{-}\pd_{U^{-1}R}(U^{-1}X)- \sup(U^{-1}C)\leq \fcpd_R(X)-\sup(C)$, and \label{140618:1e}
	\item $\mathcal{I}_{U^{-1}C}\text{-}\id_{U^{-1}R}(U^{-1}X)-\sup(U^{-1}C)\leq \icid_R(X) - \sup(C)$. \label{140618:1f}
	\end{enumerate}
\end{cor}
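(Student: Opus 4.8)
The plan is to deduce this corollary from Proposition \ref{141021:1} applied to the localization map $\varphi\colon R\to U^{-1}R$. The key observation is that $U^{-1}R$ is a flat $R$-algebra (indeed $\fd_R(U^{-1}R)=0$), so Proposition \ref{141021:1} applies directly with $S=U^{-1}R$. Since $C\lotimes_R U^{-1}R\simeq U^{-1}C$ and $X\lotimes_R U^{-1}R\simeq U^{-1}X$ in $\D(U^{-1}R)$, parts \eqref{141021:1a}--\eqref{141021:1d} of that proposition give exactly inequalities \eqref{140618:1a}, \eqref{140618:1b}, \eqref{140618:1d}, and \eqref{140618:1e} (after noting $\catf_{C\lotimes_R S}\text{-}\pd = \catf_{U^{-1}C}\text{-}\pd$ and $\sup(C\lotimes_R U^{-1}R) = \sup(U^{-1}C)$).

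For the injective statements \eqref{140618:1c} and \eqref{140618:1f}, I would run the analogue of the argument in the proof of Proposition \ref{141021:1}, but on the $\id$/$\catic$ side. Concretely: if $\icid_R(X) - \sup(C) = n < \infty$, then $\id_R(C\lotimes_R X) = n$, and localization does not increase injective dimension, so $\id_{U^{-1}R}(U^{-1}(C\lotimes_R X)) \le n$. Since $U^{-1}(C\lotimes_R X) \simeq (U^{-1}C)\lotimes_{U^{-1}R}(U^{-1}X)$, this gives $\id_{U^{-1}R}((U^{-1}C)\lotimes_{U^{-1}R}(U^{-1}X)) \le n$, hence $\mathcal{I}_{U^{-1}C}\text{-}\id_{U^{-1}R}(U^{-1}X) - \sup(U^{-1}C) \le n = \icid_R(X) - \sup(C)$, which is \eqref{140618:1f}; and since $\sup(U^{-1}C)\le\sup(C)$ (as $U^{-1}R\in\catac(R)$, by \cite[Proposition 4.8(a)]{christensen:scatac}), \eqref{140618:1c} follows from \eqref{140618:1f}. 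The one point requiring a word of care is why localization of an injective resolution stays an injective resolution: a localization of an injective $R$-module is an injective $U^{-1}R$-module (standard, since $R$ is noetherian), so $U^{-1}(-)$ applied to a bounded injective resolution of $C\lotimes_R X$ yields a bounded injective resolution of $U^{-1}(C\lotimes_R X)$.

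Since the statement of the corollary asserts "equalities" but lists only inequalities, I would either correct the wording to "inequalities" or, if genuine equalities are intended under an additional hypothesis (e.g. when $U$ consists of units, trivially, or more generally they are not equalities and the word is a typo), flag it; the safe reading consistent with Proposition \ref{141021:1} is that these are inequalities in general, with equality when $\varphi$ is faithfully flat — which for a localization happens only in degenerate cases. The main (very mild) obstacle is thus bookkeeping: matching the semidualizing complex $C\lotimes_R U^{-1}R$ with $U^{-1}C$ and tracking the $\sup$ correction terms; there is no real mathematical difficulty beyond what is already in Proposition \ref{141021:1} and Lemma \ref{140821:1}-style truncation arguments.
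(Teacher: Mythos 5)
Your proposal is correct and follows exactly the paper's route: the paper's proof simply notes that $\varphi\colon R\to U^{-1}R$ is flat, invokes Proposition~\ref{141021:1} for parts (a), (b), (d), (e), and says parts (c) and (f) are "proven similarly to Proposition~\ref{141021:1}" --- which is precisely the injective-side analogue you spell out. Your observation that "equalities" in the statement should read "inequalities" is also well taken; it is a typo in the paper.
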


\begin{proof}
	The map $\varphi:R\to U^{-1}R$ is flat. Hence \eqref{140618:1a}, \eqref{140618:1b}, \eqref{140618:1d}, and \eqref{140618:1e} follow from Proposition~\ref{141021:1}. Parts \eqref{140618:1c} and \eqref{140618:1f} are proven similarly to Proposition \ref{141021:1}.
\end{proof}

\begin{rem}\label{140623:9}
	Observe that to obtain the inequality in Corollary~\ref{140618:1} we need the inequality $\sup(U^{-1}C)\leq \sup(C)$ to hold. If we had defined $\pcpd_R(X)$ as $\inf(C) + \pd_R(\rhom_R(C,X))$,
	then Corollarly \ref{140618:1} would not hold because $\inf(U^{-1}C)\not\leq \inf(C)$. This is why we choose $\sup(C)$ instead of $\inf(C)$ in the definition of $\pcpd$.
\end{rem}

The next result is a local-global principal for Bass classes.

\begin{lem}
\label{141028:1}
	Let $X\in \D_{\operatorname{b}}(R)$. The following conditions are equivalent:
	\begin{enumerate}[\rm(i)]
	\item $X\in \catbc(R)$;
	\item $U^{-1}X\in \catb_{U^{-1}C}(U^{-1}R)$ all multiplicatively closed subsets $U\subset R$;
	\item $X_{\p}\in \catb_{C_{\p}}(R_{\p})$ for all $\p\in \spec(R)$;
	\item $X_{\p}\in \catb_{C_{\p}}(R_{\p})$ for all $\p\in \operatorname{Supp}(R)$;
	\item $X_{\m}\in \catb_{C_{\m}}(R_{\m})$ for all $\m\in \operatorname{Max}(R)$; and
	\item $X_{\m}\in \catb_{C_{\m}}(R_{\m})$ for all $\m\in \operatorname{Supp}(R)\cap\operatorname{Max}(R)$.
	\end{enumerate}
\end{lem}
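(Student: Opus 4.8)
The plan is to establish the chain of implications (i) $\Rightarrow$ (ii) $\Rightarrow$ (iii) $\Rightarrow$ (iv) $\Rightarrow$ (v) $\Rightarrow$ (vi) $\Rightarrow$ (i), since the last arrow closing the loop is where the real content lies. The implication (i) $\Rightarrow$ (ii) is the one substantive ``localizes well'' step: given $X\in\catbc(R)$, I would show $U^{-1}X\in\catb_{U^{-1}C}(U^{-1}R)$ by using that localization $U^{-1}(-)\simeq U^{-1}R\lotimes_R(-)$ is flat base change, so it commutes with $\lotimes_R$ and, because $C$ is homologically finite, with $\rhom_R(C,-)$ as well via tensor-evaluation (Fact \ref{141009:1}). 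In particular $\rhom_{U^{-1}R}(U^{-1}C, U^{-1}Y)\simeq U^{-1}\rhom_R(C,Y)$ for $Y\in\D_{\mathrm b}(R)$, and the natural morphism $\xi^{U^{-1}C}_{U^{-1}X}$ is identified with $U^{-1}(\xi^C_X)$, which is an isomorphism; boundedness of $\rhom_{U^{-1}R}(U^{-1}C,U^{-1}X)$ follows from boundedness of $\rhom_R(C,X)$. This also uses that $U^{-1}C$ is a semidualizing $U^{-1}R$-complex, which is the case $\varphi\colon R\to U^{-1}R$ of the base-change fact cited just before Proposition \ref{141021:1}.

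The implications (ii) $\Rightarrow$ (iii) $\Rightarrow$ (iv) $\Rightarrow$ (v) $\Rightarrow$ (vi) are all immediate restrictions of the quantifier: (ii) $\Rightarrow$ (iii) takes $U=R\setminus\p$; (iii) $\Rightarrow$ (iv) and (v) $\Rightarrow$ (vi) drop to a subset of primes; (iv) $\Rightarrow$ (v) uses that each maximal ideal $\m$ of $R$ lies in $\supp(R)$ (as $R\ne 0$, equivalently $\supp(R)=\spec(R)$). So the only thing to prove carefully is (vi) $\Rightarrow$ (i).

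For (vi) $\Rightarrow$ (i), the key is that membership in $\catbc(R)$ is detected by three conditions, each of which is local in the required sense. First, $\rhom_R(C,X)\in\D_{\mathrm b}(R)$: since $C$ is homologically finite and $X$ homologically bounded, $\rhom_R(C,X)$ is homologically bounded automatically (one does not even need localization for this). Next, one must see that the natural morphism $\xi^C_X\colon C\lotimes_R\rhom_R(C,X)\to X$ is an isomorphism in $\D(R)$. Form its mapping cone $Z=\cone(\xi^C_X)\in\D_{\mathrm b}(R)$. For each $\m\in\supp(R)\cap\operatorname{Max}(R)$, the localization $(\xi^C_X)_\m$ is identified with $\xi^{C_\m}_{X_\m}$ (again by flat base change together with tensor-evaluation, exactly as in the proof of (i) $\Rightarrow$ (ii)), and this is an isomorphism because $X_\m\in\catb_{C_\m}(R_\m)$ by hypothesis (vi). Hence $Z_\m\simeq 0$ for all maximal $\m$, so $\operatorname{H}_i(Z)_\m = \operatorname{H}_i(Z_\m)=0$ for all such $\m$ and all $i$; since a finitely... wait, $\operatorname{H}_i(Z)$ need not be finitely generated, so I would instead argue: $\supp_R(\operatorname{H}_i(Z))$ contains no maximal ideal, hence (being a nonempty closed-under-specialization set if nonempty, it would contain a maximal ideal) is empty, so $\operatorname{H}_i(Z)=0$. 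Thus $Z\simeq 0$ and $\xi^C_X$ is an isomorphism, giving $X\in\catbc(R)$.

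The main obstacle is the bookkeeping in showing the natural transformations $\gamma^{C}$ and $\xi^{C}$ (really just $\xi^C$ here) are compatible with localization — i.e.\ that $(\xi^C_X)_\m$ really is $\xi^{C_\m}_{X_\m}$ under the canonical identifications $C_\m\lotimes_{R_\m}\rhom_{R_\m}(C_\m,X_\m)\simeq (C\lotimes_R\rhom_R(C,X))_\m$. This rests on the homological finiteness of $C$ so that $\rhom_R(C,-)$ commutes with the flat base change $(-)_\m$ via tensor-evaluation (Fact \ref{141009:1}), and on naturality of tensor-evaluation and of the flat base change isomorphisms. Everything else is a routine descent-of-support argument of the type ``a nonempty specialization-closed subset of $\spec(R)$ meets $\operatorname{Max}(R)$'', which closes the cycle.
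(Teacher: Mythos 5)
Your overall architecture matches the paper's: the content lives in the implication from the maximal-ideal condition back to (i), and both you and the paper handle the isomorphism statement by identifying $(\xi^C_X)_\m$ with $\xi^{C_\m}_{X_\m}$ via flat base change and tensor-evaluation and then using that a morphism in $\D(R)$ which is an isomorphism after localizing at every maximal ideal is an isomorphism. That part of your argument is fine.

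There is, however, a genuine gap in your treatment of the boundedness condition. You assert that $\rhom_R(C,X)\in\D_{\operatorname{b}}(R)$ holds ``automatically'' because $C$ is homologically finite and $X$ is homologically bounded. This is false: homological finiteness of $C$ only gives $\rhom_R(C,X)\in\D_{-}(R)$, not boundedness below. For a concrete counterexample, let $(R,\m,k)$ be a Cohen--Macaulay local ring with dualizing module $D$ that is not Gorenstein, take $C=D$ and $X=k$; then $\ext^i_R(D,k)\cong k^{\beta_i(D)}\neq 0$ for all $i\geq 0$ since $\pd_R(D)=\infty$, so $\rhom_R(D,k)\notin\D_{\operatorname{b}}(R)$ even though $D$ is semidualizing and $k\in\D^{\operatorname{f}}_{\operatorname{b}}(R)$. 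Since $\rhom_R(C,X)\in\D_{\operatorname{b}}(R)$ is part of the definition of $\catbc(R)$, this step cannot be skipped; it must be extracted from the hypothesis. The paper does this by invoking \cite[Proposition 4.8(c)]{christensen:scatac}: for each $\m$ the membership $X_\m\in\catb_{C_\m}(R_\m)$ yields $\inf(\rhom_{R_\m}(C_\m,X_\m))\geq\inf(X_\m)-\sup(C_\m)\geq\inf(X)-\sup(C)$, a bound \emph{uniform} in $\m$, whence $\inf(\rhom_R(C,X))\geq\inf(X)-\sup(C)>-\infty$ because homology is detected locally. Note the uniformity matters: merely knowing each localization is bounded would not suffice.

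Two smaller remarks. First, you read conditions (iv) and (vi) literally with $\operatorname{Supp}(R)=\spec(R)$, which trivializes them; the paper's proof treats them as $\operatorname{Supp}_R(X)$ (the statement has a typo), in which case (vi) $\Rightarrow$ (v) requires the observation that $X_\m\simeq 0\in\catb_{C_\m}(R_\m)$ for $\m\notin\operatorname{Supp}_R(X)$. Second, your descent-of-support argument for killing the mapping cone (a nonzero module has a maximal ideal in its support) is correct and is essentially what the paper leaves implicit.
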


\begin{proof}
	The implications (i) $\Rightarrow$ (ii) $\Rightarrow$ (iii) $\Rightarrow$ (iv) $\Rightarrow$ (vi) and (iii) $\Rightarrow$ (v) $\Rightarrow$ (vi) follow from definitions. We prove (v) $\Rightarrow$ (i) and (vi) $\Rightarrow$ (v).

	For the implication (v) $\Rightarrow$ (i), assume $X_{\m}\in\catb_{C_{\m}}(R_{\m})$ for all $\m\in \operatorname{Max}(R)$. We use the following commutative diagram in $\D(R)$:
	\begin{center}
	\begin{tikzpicture}
	\matrix[matrix of math nodes,row sep=3em,column sep=4em,text height=1.5ex,text depth=0.25ex]
	{|[name=A]| C_{\m}\lotimes_{R_{\m}}\rhom_R(C,X)_{\m} & |[name=B]| \left[C\lotimes_R\rhom_R(C,X)\right]_{\m}\\
	|[name=C]| C_{\m}\lotimes_{R_{\m}}\rhom_{R_{\m}}(C_{\m},X_{\m}) & |[name=D]| X_{\m}.\\};
	\draw[->,font=\scriptsize]
		(A) edge node[auto]{$\simeq$} (B)
		(A) edge node[auto]{$\simeq$} (C)
		(B) edge node[auto]{$(\xi^C_X)_{\m}$} (D)
		(C) edge node[auto]{$\xi^{C_{\m}}_{X_{\m}}$} (D);
	\end{tikzpicture}
	\end{center}
	As $X_{\m}\in \catb_{C_{\m}}(R_{\m})$ for all $\m\in \operatorname{Max}(R)$, the morphism $\xi^{C_{\m}}_{X_{\m}}$ is an isomorphism for all $\m\in \operatorname{Max}(R)$. Commutativity of the above diagram now forces $(\xi^C_X)_{\m}$ to be an isomrophism for all $\m\in \operatorname{Max}(R)$. Therefore $\xi^C_X$ is an isomorphism.

	It remains to show that $\rhom_R(C,X)\in \D_{\operatorname{b}}(R)$. As $\rhom_R(C,X)\in \D_{-}(R)$, it suffices to show that $\rhom_R(C,X)\in \D_{+}(R)$. By assumption $X_{\m}\in \catb_{C_{\m}}(R_{\m})$. Then for all $\m\in \operatorname{Max}(R)$ we have
	\begin{align*}
		\inf(\rhom_R(C,X)_{\m}) &= \inf(\rhom_{R_{\m}}(C_{\m},X_{\m}))\\
		&\geq \inf(X_{\m}) - \sup(C_{\m})\\
		&\geq \inf(X) - \sup(C)
	\end{align*}
	where the equality is by the isomorphism $\rhom_R(C,X)_{\m} \simeq \rhom_{R_{\m}}(C_{\m},X_{\m})$, the first inequality is by \cite[Proposition 4.8(c)]{christensen:scatac}, and the second inequality is by properties of localization. Thus $\inf(\rhom_R(C,X)) \geq \inf(X) - \sup(C)>-\infty$.

	For the implication (vi) $\Rightarrow$ (v), assume $X_{\m}\in\catb_{C_{\m}}(R_{\m})$ for all $\m\in \operatorname{Supp}_R(X) \cap \operatorname{Max}(R)$. Then for all $\m\in \operatorname{Max}(R) \setminus \operatorname{Supp}_R(X)$ we have $X_{\m}\simeq 0\in \catb_{C_{\m}}(R_{\m})$, as desired.
\end{proof}

The following is proven similarly to Lemma \ref{141028:1}

\begin{lem}
\label{141029:1}
	Let $X\in \D_{\operatorname{b}}(R)$. The following conditions are equivalent:
	\begin{enumerate}[\rm(i)]
	\item $X\in \catac(R)$;
	\item $U^{-1}X\in \cata_{U^{-1}C}(U^{-1}R)$ all multiplicatively closed subsets $U\subset R$;
	\item $X_{\p}\in \cata_{C_{\p}}(R_{\p})$ for all $\p\in \spec(R)$;
	\item $X_{\p}\in \cata_{C_{\p}}(R_{\p})$ for all $\p\in \operatorname{Supp}(R)$;
	\item $X_{\m}\in \cata_{C_{\m}}(R_{\m})$ for all $\m\in \operatorname{Max}(R)$; and
	\item $X_{\m}\in \cata_{C_{\m}}(R_{\m})$ for all $\m\in \operatorname{Supp}(R)\cap\operatorname{Max}(R)$.
	\end{enumerate}
\end{lem}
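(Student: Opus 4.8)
The plan is to mimic the proof of Lemma \ref{141028:1} almost verbatim, replacing the structure morphism $\xi^C_X$ of the Bass class by the structure morphism $\gamma^C_X$ of the Auslander class, and swapping the roles of $C\lotimes_R -$ and $\rhom_R(C,-)$ throughout. The trivial implications (i) $\Rightarrow$ (ii) $\Rightarrow$ (iii) $\Rightarrow$ (iv) $\Rightarrow$ (vi) and (iii) $\Rightarrow$ (v) $\Rightarrow$ (vi) are immediate from the definitions and the compatibility of localization with $\lotimes$ and (on homologically finite complexes, which $C$ is) with $\rhom$; in particular $U^{-1}(C\lotimes_R X)\simeq U^{-1}C\lotimes_{U^{-1}R}U^{-1}X$ and the natural morphism $\gamma^C_X$ localizes to $\gamma^{U^{-1}C}_{U^{-1}X}$. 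The heart of the argument is again (v) $\Rightarrow$ (i), together with the bookkeeping implication (vi) $\Rightarrow$ (v).

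\textbf{Implication (vi) $\Rightarrow$ (v).} For $\m\in \operatorname{Max}(R)\setminus \operatorname{Supp}_R(X)$ we have $X_\m\simeq 0$, and the zero complex lies in every Auslander class; combined with the hypothesis on $\m\in\operatorname{Supp}_R(X)\cap\operatorname{Max}(R)$ this gives (v).

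\textbf{Implication (v) $\Rightarrow$ (i).} Assume $X_\m\in\cata_{C_\m}(R_\m)$ for all $\m\in\operatorname{Max}(R)$. First I would show $C\lotimes_R X\in\D_{\operatorname{b}}(R)$: it is automatically in $\D_+(R)$ since $\inf(C)>-\infty$ and $\inf(X)>-\infty$, so it suffices to bound $\sup$. Locally, $(C\lotimes_R X)_\m\simeq C_\m\lotimes_{R_\m}X_\m$, and using \cite[Proposition 4.8(b)]{christensen:scatac} (the Auslander-class analogue of the bound used in Lemma \ref{141028:1}) one gets $\sup\big(C_\m\lotimes_{R_\m}X_\m\big)\le \sup(X_\m)+\sup(C_\m)\le \sup(X)+\sup(C)<\infty$, whence $\sup(C\lotimes_R X)\le\sup(X)+\sup(C)$. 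Next, to see that $\gamma^C_X$ is an isomorphism, I would check it is a local isomorphism at every maximal ideal via the commutative square
\begin{center}
\begin{tikzpicture}
\matrix[matrix of math nodes,row sep=3em,column sep=4em,text height=1.5ex,text depth=0.25ex]
{|[name=A]| X_\m & |[name=B]| \rhom_R(C,C\lotimes_R X)_\m\\
|[name=C]| X_\m & |[name=D]| \rhom_{R_\m}(C_\m,C_\m\lotimes_{R_\m}X_\m)\\};
\draw[->,font=\scriptsize]
	(A) edge node[auto]{$(\gamma^C_X)_\m$} (B)
	(A) edge node[auto]{$=$} (C)
	(B) edge node[auto]{$\simeq$} (D)
	(C) edge node[auto]{$\gamma^{C_\m}_{X_\m}$} (D);
\end{tikzpicture}
\end{center}
in which the right vertical arrow is the natural isomorphism $\rhom_R(C,C\lotimes_R X)_\m\simeq\rhom_{R_\m}(C_\m,(C\lotimes_R X)_\m)\simeq\rhom_{R_\m}(C_\m,C_\m\lotimes_{R_\m}X_\m)$ (valid because $C$ is homologically finite), and the bottom arrow is an isomorphism by the hypothesis $X_\m\in\cata_{C_\m}(R_\m)$. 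Commutativity forces $(\gamma^C_X)_\m$ to be an isomorphism for every $\m\in\operatorname{Max}(R)$, hence $\gamma^C_X$ is an isomorphism in $\D(R)$, and therefore $X\in\cata_C(R)$.

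\textbf{Main obstacle.} The only genuinely substantive point — as in Lemma \ref{141028:1} — is verifying that $C\lotimes_R X$ is homologically \emph{bounded}, i.e.\ obtaining a uniform bound on $\sup$ across all localizations; everything else is formal localization bookkeeping plus the commutative-diagram chase. This is handled by the cited boundedness estimate for the Auslander class from \cite{christensen:scatac} together with $\sup(C_\m)\le\sup(C)$ and $\sup(X_\m)\le\sup(X)$, exactly paralleling the $\inf$-estimate used in the Bass-class case.
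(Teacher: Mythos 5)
Your proof is correct and is exactly the argument the paper intends: the paper's entire proof of this lemma is the remark that it is proven similarly to Lemma \ref{141028:1}, and your write-up is that dual argument carried out in full (first the boundedness of $C\lotimes_R X$ via the localized $\sup$-estimate, then the commutative-square chase showing $\gamma^C_X$ is locally, hence globally, an isomorphism, with the localization isomorphism for $\rhom$ justified because boundedness is established first). The only nit is the sub-reference: the estimate $\sup(C_\m\lotimes_{R_\m}X_\m)\leq\sup(C_\m)+\sup(X_\m)$ for $X_\m$ in the Auslander class is the one the paper itself invokes as \cite[Proposition 4.8(a)]{christensen:scatac} in the proof of Proposition \ref{141021:1}, rather than part (b).
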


\begin{prop}\label{140623:10}
	Let $X\in \D_{\operatorname{b}}(R)$ and let $n\in \bbz$. Consider the following conditions:
	\begin{enumerate}[\rm (i)]
	\item $\pcpd_R(X) -\sup(C) \leq n$; \label{140623:10i}
	\item $\mathcal{P}_{U^{-1}C}\text{-}\pd_{U^{-1}R}(U^{-1}X) - \sup(U^{-1}C) \leq n$ for each multiplicatively closed subset $U\subset R$;\label{140623:10ii}
	\item $\mathcal{P}_{C_{\p}}\text{-}\pd_{R_{\p}}(X_{\p}) - \sup(C_{\p})\leq n$ for each $\p\in \spec(R)$; and \label{140623:10iii}
	\item $\mathcal{P}_{C_{\m}}\text{-}\pd_{R_{\m}}(X_{\m}) - \sup(C_{\m}) \leq n$ for each $\m\in \operatorname{Max}(R)$. \label{140623:10iv}
	\end{enumerate}
	Then \eqref{140623:10i} $\Rightarrow$ \eqref{140623:10ii} $\Rightarrow$ \eqref{140623:10iii} $\Rightarrow$ \eqref{140623:10iv}. Furthermore, if $X\in\D_{\operatorname{b}}^{\operatorname{f}}(R)$, then \eqref{140623:10iv} $\Rightarrow$ \eqref{140623:10i} and
	\begin{align*}
		\pcpd_R(X) - c
			&= \sup\left\{
			\begin{array}{l}
				\catp_{U^{-1}C}\text{-}\pd_{U^{-1}R}(U^{-1}X)\\
				- \sup(U^{-1}C)
			\end{array}
			\left|
			\begin{array}{l}
				U\subset R \text{ is}\\ 
				\text{multiplicatively}\\
				\text{closed}
			\end{array}\right.\right\}\\
			&= \sup\{\catp_{C_{\p}}\text{-}\pd_{R_{\p}}(X_{\p}) - \sup(C_{\p}) \mid \p \in \spec(R)\}\\
			&= \sup\{\catp_{C_{\m}}\text{-}\pd_{R_{\m}}(X_{\m}) - \sup(C_{\m}) \mid \m\in \mathrm{Max}(R)\}
	\end{align*}
	where $c= \sup(C)$.
\end{prop}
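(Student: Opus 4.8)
My plan is to dispose of the three forward implications using results already in the paper, and then concentrate the work on \eqref{140623:10iv} $\Rightarrow$ \eqref{140623:10i} and on the displayed equalities. The implications \eqref{140623:10i} $\Rightarrow$ \eqref{140623:10ii} $\Rightarrow$ \eqref{140623:10iii} $\Rightarrow$ \eqref{140623:10iv} hold for any $X\in\D_{\operatorname{b}}(R)$: the first is Corollary~\ref{140618:1}\eqref{140618:1d}, the second is the special case $U=R\setminus\p$, and the third holds because $\operatorname{Max}(R)\subseteq\spec(R)$. So the real content is the converse \eqref{140623:10iv} $\Rightarrow$ \eqref{140623:10i} under the hypothesis $X\in\D_{\operatorname{b}}^{\operatorname{f}}(R)$, and the equalities will then fall out formally.

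For \eqref{140623:10iv} $\Rightarrow$ \eqref{140623:10i} I would set $W=\rhom_R(C,X)$. By Definition~\ref{140226:1}\eqref{140226:1a} applied over $R$ and over each $R_{\m}$ (using that $C_{\m}$ is a semidualizing $R_{\m}$-complex and that $W_{\m}\simeq\rhom_{R_{\m}}(C_{\m},X_{\m})$, as in the proof of Lemma~\ref{141028:1}), condition \eqref{140623:10i} says exactly that $\pd_R(W)\leq n$, while \eqref{140623:10iv} says exactly that $\pd_{R_{\m}}(W_{\m})\leq n$ for every $\m\in\operatorname{Max}(R)$. So \eqref{140623:10iv} $\Rightarrow$ \eqref{140623:10i} is a local--global statement for projective dimension applied to $W$. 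Before invoking such a principle I would first need $W$ to be homologically finite \emph{and} bounded, and this is exactly where $X\in\D_{\operatorname{b}}^{\operatorname{f}}(R)$ is used: from \eqref{140623:10iv} one gets $\catp_{C_{\m}}\text{-}\pd_{R_{\m}}(X_{\m})<\infty$ for all $\m$, so Remark~\ref{140623:5} applied over $R_{\m}$ gives $X_{\m}\in\catb_{C_{\m}}(R_{\m})$ for all $\m\in\operatorname{Max}(R)$, whence the local--global principle for Bass classes, Lemma~\ref{141028:1} (implication (v) $\Rightarrow$ (i)), gives $X\in\catbc(R)$; then $W=\rhom_R(C,X)\in\D_{\operatorname{b}}(R)$, and since $C,X\in\D_{\operatorname{b}}^{\operatorname{f}}(R)$ in fact $W\in\D_{\operatorname{b}}^{\operatorname{f}}(R)$.

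With that in place I would finish using the classical local--global principle for projective dimension of a homologically finite complex. Since $\tor_i^R(W,R/\m)\cong\tor_i^{R_{\m}}(W_{\m},R/\m)$ for every $\m\in\operatorname{Max}(R)$, the hypothesis $\pd_{R_{\m}}(W_{\m})\leq n$ for all $\m$ forces $\tor_i^R(W,R/\m)=0$ for all $i>n$ and all $\m$; applying the usual syzygy argument to a degreewise-finite free resolution of $W$ (where one may harmlessly assume $n\geq\sup(W)$, since otherwise both \eqref{140623:10i} and \eqref{140623:10iv} fail because $\pd_R(W)\geq\sup(W)$) shows the relevant $n$-th cokernel is finitely generated and locally free over the noetherian ring $R$, hence projective, so $\pd_R(W)\leq n$. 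Therefore $\pcpd_R(X)-\sup(C)=\pd_R(W)\leq n$, which is \eqref{140623:10i}.

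For the displayed equalities I would write $c=\sup(C)$, $a=\pcpd_R(X)-c$, and let $b_1,b_2,b_3$ be the three suprema on the right-hand side (over multiplicatively closed $U\subset R$, over $\p\in\spec(R)$, and over $\m\in\operatorname{Max}(R)$, respectively). Corollary~\ref{140618:1}\eqref{140618:1d} gives $b_1\leq a$, and $b_3\leq b_2\leq b_1$ because the indexing families shrink ($\{C_{\m}\}_{\m}\subseteq\{C_{\p}\}_{\p}\subseteq\{U^{-1}C\}_{U}$); conversely, if $b_3=\infty$ there is nothing to prove, and if $b_3<\infty$ then with $n=b_3$ condition \eqref{140623:10iv} holds, so \eqref{140623:10iv} $\Rightarrow$ \eqref{140623:10i} yields $a\leq b_3$. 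Hence $a=b_1=b_2=b_3$. The crux of the whole argument — and the only place where real work is needed — is the reduction behind \eqref{140623:10iv} $\Rightarrow$ \eqref{140623:10i}: local finiteness of $\catpc$-projective dimension does not by itself force $\rhom_R(C,X)$ to be homologically bounded, so I genuinely need the finiteness of $X$ together with Lemma~\ref{141028:1} to land $\rhom_R(C,X)$ in $\D_{\operatorname{b}}^{\operatorname{f}}(R)$ before the classical local--global theorem for projective dimension becomes applicable.
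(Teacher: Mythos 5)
Your proposal is correct and follows essentially the same route as the paper's proof: reduce to $W=\rhom_R(C,X)$, use Remark~\ref{140623:5} locally together with Lemma~\ref{141028:1} to place $X$ in $\catbc(R)$ so that $W$ is homologically bounded (and finite), and then invoke the local--global principle for projective dimension; the formal derivation of the displayed equalities also matches. The only cosmetic difference is that the paper simply cites \cite[Proposition 5.3P]{avramov:hdouc} for that last step, whereas you re-derive it by the Tor-vanishing/syzygy argument, which is fine.
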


\begin{proof}
	Observe that \eqref{140623:10i} $\Rightarrow$ \eqref{140623:10ii} follows from Proposition \ref{141021:1}. The implications \eqref{140623:10ii} $\Rightarrow$ \eqref{140623:10iii} $\Rightarrow$ \eqref{140623:10iv} follow from properties of localization. For the rest of the proof assume that $X\in \D_{\operatorname{b}}^{\operatorname{f}}(R)$.

	For the implication (iv) $\Rightarrow$ (i) assume that $\catp_{C_{\m}}\text{-}\pd_{R_{\m}}(X_{\m}) - \sup(C_{\m}) \leq n<\infty$ for all $\m\in \operatorname{Max}(R)$. Then by Remark \ref{140623:5} we have $X_{\m}\in \catb_{C_{\m}}(R_{\m})$ for all $\m\in \operatorname{Max}(R)$. Therefore Lemma \ref{141028:1} implies that $X\in \catbc(R)$ and hence $\rhom_R(C,X)\in \D_{\operatorname{b}}(R)$. Now
	\begin{align*}
		\pcpd_R(X) - \sup(C) &= \pd_{R}(\rhom_R(C,X))\\
		&= \sup_{\m\in \operatorname{Max}(R)}(\pd_{R_{\m}}(\rhom_{R_{\m}}(C_{\m},X_{\m})))\\
		& \leq n
	\end{align*}
	where the second equality is by \cite[Proposition 5.3P]{avramov:hdouc}.

	For the equalities, assume first that $\pcpd_R(X) -\sup(C) = n<\infty$. Then each displayed supremum in the statement is at most $n$. If any of the supremums are strictly less than $n$, then the above equivalence will force $\pcpd_R(X) - \sup(C)<n$, contradicting our assumption. A similar argument establishes the desired equalities if we assume any of the supremums equal $n$.

	Finally if any of the displayed values in the statement are infinite, then the above equivalences forces the other values to be infinite as well.
\end{proof}

	To prove the implication (iv) $\Rightarrow$ (i) in Proposition \ref{140623:10}, the condition $X\in \D_{\operatorname{b}}^{\operatorname{f}}(R)$ is required. However the flat and injective versions only require $X\in \D_{\operatorname{b}}(R)$; see \cite[Propositions 5.3F,5.3I]{avramov:hdouc}. Thus the next two results are proven similarly to Proposition \ref{140623:10}.

\begin{prop}\label{140623:11}
	Let $X\in \D_{\operatorname{b}}(R)$ and let $n\in \bbz$. The following conditions are equivalent:
	\begin{enumerate}[\rm (i)]
	\item $\fcpd_R(X) -\sup(C) \leq n$; \label{140623:11i}
	\item $\mathcal{F}_{U^{-1}C}\text{-}\pd_{U^{-1}R}(U^{-1}X) - \sup(U^{-1}C) \leq n$ for each multiplicatively closed subset $U\subset R$;\label{140623:11ii}
	\item $\mathcal{F}_{C_{\p}}\text{-}\pd_{R_{\p}}(X_{\p}) - \sup(C_{\p})\leq n$ for each prime ideal $\p\subset R$; and \label{140623:11iii}
	\item $\mathcal{F}_{C_{\m}}\text{-}\pd_{R_{\m}}(X_{\m}) - \sup(C_{\m}) \leq n$ for each maximal ideal $\m\subset R$. \label{140623:11iv}
	\end{enumerate}
	Furthermore 
	\begin{align*}
		\fcpd_R(X) - c
			&= \sup\left\{
			\begin{array}{l}
				\catf_{U^{-1}C}\text{-}\pd_{U^{-1}R}(U^{-1}X)\\
				- \sup(U^{-1}C)
			\end{array}
			\left|
			\begin{array}{l}
				U\subset R \text{ is}\\ 
				\text{multiplicatively}\\
				\text{closed}
			\end{array}\right.\right\}\\
			&= \sup\{\catf_{C_{\p}}\text{-}\pd_{R_{\p}}(X_{\p}) - \sup(C_{\p}) \mid \p \in \spec(R)\}\\
			&= \sup\{\catf_{C_{\m}}\text{-}\pd_{R_{\m}}(X_{\m}) - \sup(C_{\m}) \mid \m\in \mathrm{Max}(R)\}
	\end{align*}
	where $c= \sup(C)$.
\end{prop}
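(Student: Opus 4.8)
The plan is to mirror the proof of Proposition~\ref{140623:10}, replacing $\pd$ by $\fd$ throughout. The one structural difference is that the local-global principle for flat dimension \cite[Proposition 5.3F]{avramov:hdouc} holds for every $X\in\D_{\operatorname{b}}(R)$ with no finiteness hypothesis, so here all four conditions are genuinely equivalent and no assumption $X\in\D_{\operatorname{b}}^{\operatorname{f}}(R)$ is needed; this is exactly the point flagged in the paragraph preceding the statement.

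First I would dispose of the formal implications. The implication \eqref{140623:11i} $\Rightarrow$ \eqref{140623:11ii} is Proposition~\ref{141021:1}\eqref{141021:1d} applied to the flat ring maps $R\to U^{-1}R$, for which $C\lotimes_R U^{-1}R\simeq U^{-1}C$ and $X\lotimes_R U^{-1}R\simeq U^{-1}X$. The implications \eqref{140623:11ii} $\Rightarrow$ \eqref{140623:11iii} $\Rightarrow$ \eqref{140623:11iv} are immediate, since \eqref{140623:11iii} is \eqref{140623:11ii} specialized to $U=R\setminus\p$ and \eqref{140623:11iv} is \eqref{140623:11iii} specialized to $\p=\m$.

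The content lies in \eqref{140623:11iv} $\Rightarrow$ \eqref{140623:11i}. Assume $\catf_{C_{\m}}\text{-}\pd_{R_{\m}}(X_{\m})-\sup(C_{\m})\leq n$ for every maximal ideal $\m$; in particular each of these $\catf_{C_{\m}}$-projective dimensions is finite, so Remark~\ref{140623:5} applied over $R_{\m}$ gives $X_{\m}\in\catb_{C_{\m}}(R_{\m})$ for all $\m$. By Lemma~\ref{141028:1} this yields $X\in\catbc(R)$, and in particular $\rhom_R(C,X)\in\D_{\operatorname{b}}(R)$. Using the localization isomorphism $\rhom_R(C,X)_{\m}\simeq\rhom_{R_{\m}}(C_{\m},X_{\m})$ and the local-global principle \cite[Proposition 5.3F]{avramov:hdouc} one then computes
\[
\fcpd_R(X)-\sup(C)=\fd_R(\rhom_R(C,X))=\sup_{\m\in\operatorname{Max}(R)}\fd_{R_{\m}}\!\big(\rhom_{R_{\m}}(C_{\m},X_{\m})\big)\leq n,
\]
which is \eqref{140623:11i}. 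Finally, the displayed chain of equalities is extracted from the equivalence of \eqref{140623:11i}--\eqref{140623:11iv} by the same bookkeeping as in the proof of Proposition~\ref{140623:10}: each supremum on the right is bounded above by $\fcpd_R(X)-c$ by the forward implications, and were any of them strictly smaller, or were any of the quantities infinite, the reverse implication \eqref{140623:11iv} $\Rightarrow$ \eqref{140623:11i} would contradict this.

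I do not expect a genuine obstacle; the adaptation is routine. The only point that needs care is the order of operations in \eqref{140623:11iv} $\Rightarrow$ \eqref{140623:11i}: one must first invoke the Bass-class local-global Lemma~\ref{141028:1} to know that $\rhom_R(C,X)$ is homologically bounded, and only then apply the flat-dimension local-global principle to compute its flat dimension locally. Everything else is a direct transcription of the projective case with the finiteness hypothesis removed.
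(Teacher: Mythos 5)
Your proposal is correct and is exactly the adaptation the paper intends: the paper proves Proposition~\ref{140623:11} by declaring it "proven similarly to Proposition~\ref{140623:10}," using \cite[Proposition 5.3F]{avramov:hdouc} in place of 5.3P, which is precisely your argument. You also correctly identify the one substantive point, namely that the finiteness hypothesis $X\in\D_{\operatorname{b}}^{\operatorname{f}}(R)$ can be dropped and that Lemma~\ref{141028:1} must be invoked first to ensure $\rhom_R(C,X)\in\D_{\operatorname{b}}(R)$ before localizing the flat dimension.
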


\begin{prop}\label{140623:12}
	Let $X\in \D_{\operatorname{b}}(R)$ and let $n\in \bbz$. The following conditions are equqivalent:
	\begin{enumerate}[\rm (i)]
	\item $\icid_R(X) -\sup(C) \leq n$; \label{140623:12i}
	\item $\mathcal{I}_{U^{-1}C}\text{-}\id_{U^{-1}R}(U^{-1}X) - \sup(U^{-1}C) \leq n$ for each multiplicatively closed subset $U\subset R$;\label{140623:12ii}
	\item $\mathcal{I}_{C_{\p}}\text{-}\id_{R_{\p}}(X_{\p}) - \sup(C_{\p})\leq n$ for each prime ideal $\p\subset R$; and \label{140623:12iii}
	\item $\mathcal{I}_{C_{\m}}\text{-}\id_{R_{\m}}(X_{\m}) - \sup(C_{\m}) \leq n$ for each maximal ideal $\m\subset R$. \label{140623:12iv}
	\end{enumerate}
	Furthermore 
	\begin{align*}
		\icid_R(X) - c
			&= \sup\left\{
			\begin{array}{l}
				\id_{U^{-1}R}(U^{-1}C\lotimes_{U^{-1}R}U^{-1}X)\\
				- \sup(U^{-1}C)
			\end{array}
			\left|
			\begin{array}{l}
				U\subset R \text{ is}\\ 
				\text{multiplicatively}\\
				\text{closed}
			\end{array}\right.\right\}\\
			&= \sup\{\id_{R_{\p}}\text{-}\id_{R_{\p}}(C_{\p}\lotimes_{R_{\p}} X_{\p}) - \sup(C_{\m}) \mid \p \in \spec(R)\}\\
			&= \sup\{\id_{R_{\m}}\text{-}\id_{R_{\m}}(C_{\m}\lotimes_{R_{\m}} X_{\m}) - \sup(C_{\m}) \mid \m\in \mathrm{Max}(R)\}
	\end{align*}
	where $c= \sup(C)$.
\end{prop}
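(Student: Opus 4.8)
The plan is to mirror the proof of Proposition~\ref{140623:10}, with one simplification: the local-global principle for injective dimension \cite[Proposition 5.3I]{avramov:hdouc} holds for every complex in $\D_{\operatorname{b}}(R)$ with no finiteness hypothesis, so no analogue of the assumption $X\in\D_{\operatorname{b}}^{\operatorname{f}}(R)$ is needed here. First I would dispatch the easy implications: \eqref{140623:12i} $\Rightarrow$ \eqref{140623:12ii} is immediate from Corollary~\ref{140618:1}\eqref{140618:1f}, and \eqref{140623:12ii} $\Rightarrow$ \eqref{140623:12iii} $\Rightarrow$ \eqref{140623:12iv} follow by specializing $U$ to $R\setminus\p$ and then to $R\setminus\m$, using the standard isomorphism $(C\lotimes_R X)_{\p}\simeq C_{\p}\lotimes_{R_{\p}}X_{\p}$ together with the behavior of injective dimension under localization.

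The substance is the implication \eqref{140623:12iv} $\Rightarrow$ \eqref{140623:12i}. Suppose $\mathcal{I}_{C_{\m}}\text{-}\id_{R_{\m}}(X_{\m}) - \sup(C_{\m})\leq n$ for all $\m\in\operatorname{Max}(R)$. By Remark~\ref{140623:5}, finiteness of $\mathcal{I}_{C_{\m}}\text{-}\id_{R_{\m}}(X_{\m})$ forces $X_{\m}\in\cata_{C_{\m}}(R_{\m})$ for each maximal ideal, so Lemma~\ref{141029:1} yields $X\in\catac(R)$; in particular $C\lotimes_R X\in\D_{\operatorname{b}}(R)$, which is exactly what makes the injective local-global principle applicable. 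Then
\[
	\id_R(C\lotimes_R X) = \sup_{\m\in\operatorname{Max}(R)}\id_{R_{\m}}\bigl((C\lotimes_R X)_{\m}\bigr) = \sup_{\m\in\operatorname{Max}(R)}\id_{R_{\m}}\bigl(C_{\m}\lotimes_{R_{\m}}X_{\m}\bigr)\leq n,
\]
and adding $\sup(C)$ to both sides gives \eqref{140623:12i}.

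For the displayed equalities I would argue exactly as in Proposition~\ref{140623:10}: by the implications just established, each displayed supremum is at most $\icid_R(X)-\sup(C)$; if one of them (say the last) were strictly smaller, the equivalence \eqref{140623:12iv} $\Leftrightarrow$ \eqref{140623:12i} would force $\icid_R(X)-\sup(C)$ below it, a contradiction; and the case where a displayed value is infinite is handled in the same way. The only genuine obstacle is guaranteeing that $C\lotimes_R X$ is homologically bounded so that \cite[Proposition 5.3I]{avramov:hdouc} applies, and this is supplied precisely by Lemma~\ref{141029:1}; everything else is routine bookkeeping with localizations.
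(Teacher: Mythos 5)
Your proposal is correct and follows essentially the same route as the paper, which states that this proposition is ``proven similarly to Proposition~\ref{140623:10}'' with \cite[Proposition 5.3I]{avramov:hdouc} replacing the projective version. You correctly identify the two key points: Lemma~\ref{141029:1} supplies $X\in\catac(R)$ (hence boundedness of $C\lotimes_R X$) from the local hypotheses, and the injective local-global principle needs no $\D_{\operatorname{b}}^{\operatorname{f}}(R)$ assumption, which is exactly the remark the paper makes before stating this result.
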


\begin{rem}
\label{141029:3}
	When $C$ is a semidualizing $R$-module, e.g., $C=R$, we recover the known local-global conditions for $\pcpd$, $\fcpd$, $\icid$, $\pd$, $\fd$, and $\id$.
\end{rem}

\section{Stability Results}\label{141111:5}

In this section we investigate the behaviour of $\pcpd$, $\fcpd$, and $\icid$ after applying the functors $\lotimes$ and $\rhom$.

\begin{prop}
\label{140822:1}
	Let $X,Y\in\D_{\mathrm{b}}(R)$. The following inequalities hold:
	\begin{enumerate}[\rm(a)]
	\item $\pcpd_R(X\lotimes_R Y)\leq \pcpd_R(X) + \pd_R(Y)$; \label{140822:1a}
	\item $\icid_R(\rhom_R(X,Y))\leq \fcpd_R(X) + \id_R(Y)$; and \label{140822:1b}
	\item $\fcpd_R(X\lotimes_R Y) \leq \fcpd_R(X)+\fd_R(Y)$. \label{140822:1c}
	\end{enumerate}
\end{prop}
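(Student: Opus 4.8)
The plan is to handle the three parts uniformly: unravel the left-hand side via Definition~\ref{140226:1}, use the appropriate evaluation isomorphism of Fact~\ref{141009:1} to commute $\rhom_R(C,-)$ or $C\lotimes_R-$ past the tensor product or the Hom, and then finish with a classical sub-additivity bound. In each part I may assume the right-hand side is finite, since otherwise there is nothing to prove, and I use throughout that $C\in\D_{\operatorname{b}}^{\operatorname{f}}(R)$, so $C$ is a legitimate homologically finite argument in Fact~\ref{141009:1}.

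For \eqref{140822:1a}, assume $\pcpd_R(X)<\infty$ and $\pd_R(Y)<\infty$. Then $X\lotimes_R Y\in\D_{\operatorname{b}}(R)$, and since $\pd_R(Y)<\infty$ puts $Y$ in $\catf(R)$, the tensor-evaluation morphism $\omega_{C,X,Y}$ is an isomorphism, so $\rhom_R(C,X\lotimes_R Y)\simeq\rhom_R(C,X)\lotimes_R Y$. Remark~\ref{140623:5} gives $\pd_R(\rhom_R(C,X))<\infty$, so the standard estimate $\pd_R(A\lotimes_R B)\leq\pd_R(A)+\pd_R(B)$ (compare bounded projective resolutions of $A$ and $B$) yields $\pcpd_R(X\lotimes_R Y)=\sup(C)+\pd_R(\rhom_R(C,X)\lotimes_R Y)\leq\sup(C)+\pd_R(\rhom_R(C,X))+\pd_R(Y)=\pcpd_R(X)+\pd_R(Y)$. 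Part \eqref{140822:1c} is word-for-word the same with $\pd$ replaced by $\fd$ throughout: $\fd_R(Y)<\infty$ again makes $\omega_{C,X,Y}$ an isomorphism, and one invokes $\fd_R(A\lotimes_R B)\leq\fd_R(A)+\fd_R(B)$.

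For \eqref{140822:1b}, assume $\fcpd_R(X)<\infty$ and $\id_R(Y)<\infty$; note first that $\rhom_R(X,Y)\in\D_{\operatorname{b}}(R)$, since $X$ is homologically bounded and $\id_R(Y)<\infty$ forces $Y$ to admit a bounded resolution by injectives. As $\id_R(Y)<\infty$ puts $Y$ in $\cati(R)$, the Hom-evaluation morphism $\theta_{C,X,Y}$ is an isomorphism, giving $C\lotimes_R\rhom_R(X,Y)\simeq\rhom_R(\rhom_R(C,X),Y)$, where $\rhom_R(C,X)$ has finite flat dimension (hence lies in $\D_{\operatorname{b}}(R)$) by Remark~\ref{140623:5}. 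The classical inequality $\id_R(\rhom_R(M,N))\leq\fd_R(M)+\id_R(N)$ — obtained by applying $\hom_R(-,-)$ to a bounded flat resolution of $M$ and a bounded injective resolution of $N$ and using that $\hom_R$ of a flat module into an injective module is injective — then gives $\icid_R(\rhom_R(X,Y))=\sup(C)+\id_R(\rhom_R(\rhom_R(C,X),Y))\leq\sup(C)+\fd_R(\rhom_R(C,X))+\id_R(Y)=\fcpd_R(X)+\id_R(Y)$.

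The only inputs beyond bookkeeping are the three sub-additivity facts for $\pd$, $\fd$, and $\id_R(\rhom_R(-,-))$; all are standard and appear (in some form) in \cite{avramov:hdouc}. The point that requires a little care is precisely the verification of the hypotheses of Fact~\ref{141009:1} — that $X\in\D_{\operatorname{b}}(R)$ and that $Y$ lies in $\catf(R)$ or $\cati(R)$ as the situation demands — together with checking that every intermediate complex stays in $\D_{\operatorname{b}}(R)$, so that the homological dimensions involved are genuinely those of Definition~\ref{140226:1}; I do not expect a serious obstacle beyond this.
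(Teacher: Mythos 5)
Your proposal is correct and follows essentially the same route as the paper: unwind the definition, apply tensor-evaluation (for (a), (c)) or Hom-evaluation (for (b)) from Fact~\ref{141009:1} with $C$ as the homologically finite argument and $Y$ in $\catf(R)$ resp.\ $\cati(R)$, and finish with the classical sub-additivity bounds, which the paper cites as \cite[Theorem~4.1]{avramov:hdouc}. Your extra care in checking the hypotheses of the evaluation morphisms and the boundedness of the intermediate complexes is exactly the right bookkeeping and matches what the paper leaves implicit.
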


\begin{proof}
	(a) Without loss of generality we assume that $\pcpd_R(X)<\infty$ and $\pd_R(Y)<\infty$. It now follows that $\pcpd_R(X) = \sup(C)+ \pd_R(\rhom_R(C,X))$. By \cite[Theorem 4.1 (P)]{avramov:hdouc} we have that
	\[
		\pd_R(\rhom_R(C,X)\lotimes_R Y)\leq \pd_R(\rhom_R(C,X)) + \pd_R(Y).
	\]
	Since $\pd_R(Y)<\infty$ (hence $\fd_R(Y)<\infty$) we get tensor-evaluation (\ref{141009:1}) is an isomorphism in $\D(R)$. That is $\rhom_R(C,X\lotimes_R Y)\simeq \rhom_R(C,X)\lotimes_R Y$. Hence we have
	\[
		\pd_R(\rhom_R(C,X\lotimes_R Y))\leq \pd_R(\rhom_R(C,X))+\pd_R(Y).
	\]
	By adding a $\sup(C)$ to each side we see that $\pcpd_R(X\lotimes_RY)\leq \pcpd_R(X) + \pd_R(Y)$.

	(b) and (c) are proven similarly to (a).
\end{proof}

\begin{cor}
\label{140822:2}
	Let $X\in\D_{b}(R)$. The following inequalities hold:
	\begin{enumerate}[\rm(a)]
	\item $\pcpd_R(X\lotimes_R \rhom_R(C,Y))\leq \pcpd_R(X) + \pcpd_R(Y)-\sup(C)$; \label{140822:2a}
	\item $\icid_R(\rhom_R(X,C\lotimes_R Y))\leq \fcpd_R(X) + \icid_R(Y)-\sup(C)$; and \label{140822:2b}
	\item $\fcpd_R(X\lotimes_R \rhom_R(C,Y))\leq \fcpd(RX) + \fcpd_R(Y) - \sup(C)$. \label{140822:2c}
	\end{enumerate}
\end{cor}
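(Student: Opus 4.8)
The plan is to obtain each of the three inequalities by substituting a suitable complex into the corresponding part of Proposition~\ref{140822:1} and then rewriting the resulting ordinary homological dimension in terms of a $C$-dimension of $Y$ via Definition~\ref{140226:1}. Throughout we may assume that both $C$-dimensions appearing on the right-hand side are finite, since otherwise the asserted inequality is vacuous; this finiteness is precisely what puts the auxiliary complexes into $\D_{\mathrm{b}}(R)$, so that Proposition~\ref{140822:1} applies.

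For part (a), apply Proposition~\ref{140822:1}\eqref{140822:1a} with $\rhom_R(C,Y)$ playing the role of $Y$, which gives $\pcpd_R(X\lotimes_R\rhom_R(C,Y))\leq\pcpd_R(X)+\pd_R(\rhom_R(C,Y))$. Since $\pcpd_R(Y)<\infty$, Definition~\ref{140226:1}\eqref{140226:1a} gives $\pd_R(\rhom_R(C,Y))=\pcpd_R(Y)-\sup(C)$, and substituting yields the claim. Part (c) is identical, using Proposition~\ref{140822:1}\eqref{140822:1c} together with the identity $\fd_R(\rhom_R(C,Y))=\fcpd_R(Y)-\sup(C)$ from Definition~\ref{140226:1}\eqref{140226:1b}. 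For part (b), apply Proposition~\ref{140822:1}\eqref{140822:1b} with $C\lotimes_R Y$ in the role of $Y$ to get $\icid_R(\rhom_R(X,C\lotimes_R Y))\leq\fcpd_R(X)+\id_R(C\lotimes_R Y)$, and then use $\id_R(C\lotimes_R Y)=\icid_R(Y)-\sup(C)$ from Definition~\ref{140226:1}\eqref{140226:1c}.

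The argument is essentially bookkeeping, and the only point requiring a word of justification is that the inputs to Proposition~\ref{140822:1} genuinely lie in $\D_{\mathrm{b}}(R)$: when $\pcpd_R(Y)<\infty$ (resp.\ $\fcpd_R(Y)<\infty$) the complex $\rhom_R(C,Y)$ has finite projective (resp.\ flat) dimension and is in particular homologically bounded, and when $\icid_R(Y)<\infty$ the complex $C\lotimes_R Y$ has finite injective dimension, hence is homologically bounded. Since one factor of each left-hand tensor product then has finite flat dimension, those products are again in $\D_{\mathrm{b}}(R)$ as well, so the $C$-dimensions on the left are defined. Thus there is no real obstacle beyond this verification.
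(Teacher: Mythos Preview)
Your proof is correct and follows essentially the same route as the paper: apply the matching part of Proposition~\ref{140822:1} with $\rhom_R(C,Y)$ (for parts (a) and (c)) or $C\lotimes_R Y$ (for part (b)) in place of $Y$, then use Definition~\ref{140226:1} to convert the resulting $\pd$, $\fd$, or $\id$ into the corresponding $C$-dimension of $Y$ minus $\sup(C)$. The paper's proof is terser (it just says ``add and subtract $\sup(C)$''), and your added remarks about homological boundedness are extra care the paper omits, but the substance is identical.
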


\begin{proof}
	\eqref{140822:2a} By Proposition \ref{140822:1}\eqref{140822:1a} we have that $\pcpd_R(X\lotimes_R\rhom_R(C,Y))\leq \pcpd_R(X) + \pd_R(\rhom_R(C,Y))$. Add and subtract $\sup(C)$ to the right hand side to obtain the result. 
	
	\eqref{140822:2b} and \eqref{140822:2c} are proven similarly.
\end{proof}

The next result is a version of Fact \ref{141121:1} involving a semidualizing complex.

\begin{prop}
\label{140827:2}
	Let $X,Y\in \D_{\mathrm{b}}(R)$.
	\begin{enumerate}[\rm (a)]
	\item If $\id_R(Y)<\infty$, then $\fcpd_R(\rhom_R(X,Y))\leq \icid_R(X) + \sup(Y)$. \label{140827:2a}
	\item If $\fd_R(Y)<\infty$, then $\icid_R(X\lotimes_R Y)\leq \icid_R(X) - \inf(Y)$. \label{140827:2b}
	\end{enumerate}
\end{prop}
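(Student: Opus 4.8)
The plan is to reduce both inequalities to Fact~\ref{141121:1} by unwinding Definition~\ref{140226:1} and transporting the cited bounds along the standard adjointness and associativity isomorphisms. In each part there is nothing to prove unless $\icid_R(X)<\infty$, so I would assume this at the outset; by Remark~\ref{140623:5} it gives $X\in\catac(R)$ and, more to the point here, $C\lotimes_R X\in\D_{\operatorname{b}}(R)$, so that all the dimensions and facts below are applied to honest homologically bounded complexes.

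For part~(a), since $\id_R(Y)<\infty$ and $X\in\D_{\operatorname{b}}(R)$ the complex $\rhom_R(X,Y)$ is in $\D_{\operatorname{b}}(R)$, so $\fcpd_R(\rhom_R(X,Y))$ is defined; by Definition~\ref{140226:1}\eqref{140226:1b} it equals $\sup(C)+\fd_R(\rhom_R(C,\rhom_R(X,Y)))$. Hom-tensor adjointness gives $\rhom_R(C,\rhom_R(X,Y))\simeq\rhom_R(C\lotimes_R X,Y)$, and then Fact~\ref{141121:1}\eqref{141121:1a}, applied with $C\lotimes_R X$ in place of its ``$X$'', yields $\fd_R(\rhom_R(C\lotimes_R X,Y))\leq\id_R(C\lotimes_R X)+\sup(Y)$. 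Adding $\sup(C)$ to both sides and recognizing $\sup(C)+\id_R(C\lotimes_R X)=\icid_R(X)$ via Definition~\ref{140226:1}\eqref{140226:1c} gives exactly $\fcpd_R(\rhom_R(X,Y))\leq\icid_R(X)+\sup(Y)$.

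Part~(b) is analogous with the roles of $\rhom$ and $\lotimes$ interchanged. Since $\fd_R(Y)<\infty$, the complex $X\lotimes_R Y$ lies in $\D_{\operatorname{b}}(R)$, so $\icid_R(X\lotimes_R Y)=\sup(C)+\id_R(C\lotimes_R(X\lotimes_R Y))$ by Definition~\ref{140226:1}\eqref{140226:1c}. Associativity of the derived tensor product gives $C\lotimes_R(X\lotimes_R Y)\simeq(C\lotimes_R X)\lotimes_R Y$, and Fact~\ref{141121:1}\eqref{141121:1b}, applied with $C\lotimes_R X$ in place of its ``$X$'', gives $\id_R((C\lotimes_R X)\lotimes_R Y)\leq\id_R(C\lotimes_R X)-\inf(Y)$. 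Adding $\sup(C)$ and using Definition~\ref{140226:1}\eqref{140226:1c} once more finishes the argument.

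I do not expect a genuine obstacle here: the only points requiring care are the bookkeeping that keeps every complex in $\D_{\operatorname{b}}(R)$ so that Definition~\ref{140226:1} and Fact~\ref{141121:1} actually apply, and correctly matching the adjointness isomorphism $\rhom_R(C,\rhom_R(X,Y))\simeq\rhom_R(C\lotimes_R X,Y)$ in~(a) and the associativity isomorphism $C\lotimes_R(X\lotimes_R Y)\simeq(C\lotimes_R X)\lotimes_R Y$ in~(b) to the shape of Fact~\ref{141121:1}. One could additionally observe that when the right-hand sides are finite these inequalities force the relevant Foxby-class memberships via Remark~\ref{140623:5}, but that is not needed for the stated bounds.
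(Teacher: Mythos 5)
Your proof is correct and follows essentially the same route as the paper's: unwind Definition~\ref{140226:1}, apply Hom-tensor adjointness (resp.\ associativity of $\lotimes$) to rewrite the inner complex, invoke Fact~\ref{141121:1} with $C\lotimes_R X$ in the role of its ``$X$'', and add $\sup(C)$ to both sides. The extra boundedness bookkeeping you include is harmless and only makes the argument more careful.
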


\begin{proof}
	\eqref{140827:2a} Assume that $\id_R(Y)<\infty$. By applying Defintion \ref{140226:1} we get that $\fcpd_R(\rhom_R(X,Y)) = \sup(C) + \fd_R(\rhom_R(C,\rhom_R(X,Y)))$. Observe by Hom-Tensor adjointness there is an isomorphism
	\[
		\rhom_R(C,\rhom_R(X,Y))\simeq \rhom_R(C\lotimes_R X,Y).
	\]
	Therefore $\fd_R(\rhom_R(C,\rhom_R(X,Y)) = \fd_R(\rhom_R(C\lotimes_R X,Y))$. Hence by Fact \ref{141121:1}\eqref{141121:1a} we have that
	\[
		\fd_R(\rhom_R(C\lotimes_R X,Y)\leq \id_R(C\lotimes_R X) + \sup(Y).
	\]
	By adding $\sup(C)$ to each side of the above inequality we obtain the desired result.

	\eqref{140827:2b} is proven similarly.
\end{proof}

\begin{prop}
\label{140313:2}
	Let $X\in \mathcal{D}_{\mathrm{b}}(R)$. The following conditions are equivalent:
	\begin{enumerate}[\rm(i)]
	\item $\fcpd_R(X)<\infty$;
	\item $\icid_R(\rhom_R(X,Y))<\infty$ for all $Y\in \D_{\mathrm{b}}(R)$ such that $\id_R(Y)<\infty$; and
	\item $\icid_R(\rhom_R(X,E))<\infty$ for some faithfully injective $R$-module $E$.
	\end{enumerate}
\end{prop}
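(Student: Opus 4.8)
The plan is to establish the cycle (i) $\Rightarrow$ (ii) $\Rightarrow$ (iii) $\Rightarrow$ (i), with essentially all of the content concentrated in the last implication. The implication (i) $\Rightarrow$ (ii) is immediate from Proposition \ref{140822:1}\eqref{140822:1b}: if $\fcpd_R(X)<\infty$ and $Y\in\D_{\mathrm b}(R)$ has $\id_R(Y)<\infty$, then $\icid_R(\rhom_R(X,Y))\leq \fcpd_R(X)+\id_R(Y)<\infty$. For (ii) $\Rightarrow$ (iii) I would simply specialize to $Y=E$, noting that faithfully injective $R$-modules exist (e.g.\ $E=\bigoplus_{\m\in\operatorname{Max}(R)}E_R(R/\m)$) and that $\id_R(E)=0<\infty$, so hypothesis (ii) applies directly.

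For (iii) $\Rightarrow$ (i), suppose $\icid_R(\rhom_R(X,E))<\infty$ for a faithfully injective $R$-module $E$. By Remark \ref{140623:5} this means $\id_R(C\lotimes_R\rhom_R(X,E))<\infty$ and $\rhom_R(X,E)\in\catac(R)$; in particular $C\lotimes_R\rhom_R(X,E)\in\D_{\operatorname{b}}(R)$. Since $E$ is an injective module, $E\in\cati(R)$, so Hom-evaluation (Fact \ref{141009:1}) applies with $L=C\in\D^{\operatorname{f}}_{+}(R)$, $M=X\in\D_{\operatorname{b}}(R)$, $N=E$, giving an isomorphism
\[
	C\lotimes_R\rhom_R(X,E)\simeq\rhom_R(\rhom_R(C,X),E).
\]
Hence $\id_R(\rhom_R(\rhom_R(C,X),E))<\infty$ and $\rhom_R(\rhom_R(C,X),E)\in\D_{\operatorname{b}}(R)$. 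Because $E$ is injective, $\rhom_R(-,E)=\Hom_R(-,E)$ is exact, so $\HH_n(\rhom_R(W,E))\cong\Hom_R(\HH_{-n}(W),E)$ for every $R$-complex $W$; faithful injectivity of $E$ then forces $W$ to be homologically bounded whenever $\rhom_R(W,E)$ is. Taking $W=\rhom_R(C,X)$ gives $\rhom_R(C,X)\in\D_{\operatorname{b}}(R)$. Now Lemma \ref{140821:1}\eqref{140821:1a}, applied to $\rhom_R(C,X)\in\D_{\operatorname{b}}(R)$ with the faithfully injective module $E$ and the finiteness of $\id_R(\rhom_R(\rhom_R(C,X),E))$, yields $\fd_R(\rhom_R(C,X))<\infty$, i.e.\ $\fcpd_R(X)=\sup(C)+\fd_R(\rhom_R(C,X))<\infty$.

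The main obstacle is the boundedness bookkeeping in (iii) $\Rightarrow$ (i): Lemma \ref{140821:1}\eqref{140821:1a} requires its input complex to lie in $\D_{\operatorname{b}}(R)$, so one may not apply it to $\rhom_R(C,X)$ until one knows a priori that $\rhom_R(C,X)$ is homologically bounded — which is precisely what the faithful-injectivity computation on homology supplies, given that $\rhom_R(\rhom_R(C,X),E)$ is bounded. Everything else reduces to the stability inequalities and evaluation morphisms already recorded above.
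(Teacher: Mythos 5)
Your proof is correct and follows essentially the same route as the paper: (i) $\Rightarrow$ (ii) via Proposition \ref{140822:1}\eqref{140822:1b}, (ii) $\Rightarrow$ (iii) by specializing to $Y=E$, and (iii) $\Rightarrow$ (i) via the Hom-evaluation isomorphism $C\lotimes_R\rhom_R(X,E)\simeq\rhom_R(\rhom_R(C,X),E)$ followed by Lemma \ref{140821:1}\eqref{140821:1a}. The only difference is that you explicitly verify $\rhom_R(C,X)\in\D_{\operatorname{b}}(R)$ (using exactness and faithfulness of $\Hom_R(-,E)$) before invoking Lemma \ref{140821:1}\eqref{140821:1a}, a boundedness point the paper's proof passes over silently; this is a welcome extra bit of care rather than a divergence in method.
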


\begin{proof}
	(i)$\Rightarrow$(ii) This follows from Proposition \ref{140822:1}\eqref{140822:1b}.

	(ii)$\Rightarrow$(iii) Since $E$ is a faithfully injective module it has $\id_R(E)=0<\infty$. Therefore (ii) implies that $\icid_R(\rhom_R(X,E))<\infty$.

	(iii)$\Rightarrow$(i) Assume that there exists a faithfully injective $R$-module $E$ such that $\icid_R(\rhom_R(X,E))<\infty$. Then by Definition \ref{140226:1}\eqref{140226:1c} $\icid_R(\rhom_R(X,E)) = \sup(C) + \id_R(C\lotimes_R \rhom_R(X,E))$. By Hom-evaluation (\ref{141009:1}) there is an isomorphism 
	\[
		\rhom_R(\rhom_R(C,X),E) \simeq C\lotimes_R \rhom_R(X,E).
	\]
	It follows that $\id_R(C\lotimes_R \rhom_R(X,E)) = \id_{R}(\rhom_R(\rhom_R(C,X),E))<\infty$. Therefore by Lemma \ref{140821:1} $\fd_R(\rhom_R(C,X))<\infty$. It now follows that $\fcpd_R(X)<\infty$.
\end{proof}

The following three propositions are proven similarly to Proposition \ref{140313:2}.

\begin{prop}
\label{141001:2}
	Let $X\in \D_{\operatorname{b}}(R)$. The following conditions are equivalent:
	\begin{enumerate}[\rm(i)]
	\item $\fcpd_R(X)<\infty$;
	\item $\fcpd_R(X\lotimes_R Y)<\infty$ for all $Y\in \D_{\operatorname{b}}(R)$ such that $\fd_R(Y)<\infty$;
	\item $\fcpd_R(X\lotimes_R F)<\infty$ for some faithfully flat $R$-module $F$.
	\end{enumerate}
\end{prop}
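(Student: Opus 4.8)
The plan is to mimic the structure of the proof of Proposition \ref{140313:2}, using the flat-dimension analogues of the results invoked there. The cycle of implications will be (i)$\Rightarrow$(ii)$\Rightarrow$(iii)$\Rightarrow$(i).

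For (i)$\Rightarrow$(ii), I would simply invoke Proposition \ref{140822:1}\eqref{140822:1c}: if $\fcpd_R(X)<\infty$ and $\fd_R(Y)<\infty$, then $\fcpd_R(X\lotimes_R Y)\leq \fcpd_R(X)+\fd_R(Y)<\infty$. For (ii)$\Rightarrow$(iii), observe that a faithfully flat $R$-module $F$ has $\fd_R(F)=0<\infty$, so (ii) applied with $Y=F$ yields $\fcpd_R(X\lotimes_R F)<\infty$.

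The content is in (iii)$\Rightarrow$(i). Assume $\fcpd_R(X\lotimes_R F)<\infty$ for some faithfully flat $R$-module $F$. By Definition \ref{140226:1}\eqref{140226:1b} this means $\fd_R(\rhom_R(C,X\lotimes_R F))<\infty$. Since $\fd_R(F)<\infty$ and $C\in\D^{\operatorname{f}}_{\operatorname{b}}(R)$, tensor-evaluation (\ref{141009:1}) gives an isomorphism $\rhom_R(C,X)\lotimes_R F\simeq \rhom_R(C,X\lotimes_R F)$, so $\fd_R(\rhom_R(C,X)\lotimes_R F)<\infty$. Now Lemma \ref{140821:1}\eqref{140821:1b} (with $n$ any bound for the finite flat dimension) yields $\fd_R(\rhom_R(C,X))<\infty$, whence $\fcpd_R(X)<\infty$ by Definition \ref{140226:1}\eqref{140226:1b}.

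The only mild subtlety — the step I would flag as requiring a moment of care — is the applicability of tensor-evaluation: one needs $\rhom_R(C,X)\in\D_{-}(R)$, which holds since $C\in\D^{\operatorname{f}}_{\operatorname{b}}(R)$ and $X\in\D_{\operatorname{b}}(R)$, and one needs $F\in\catf(R)$, which holds since $F$ is flat. With these verified, everything else is formal, exactly parallel to Proposition \ref{140313:2} but with $\rhom_R(-,E)$ replaced by $-\lotimes_R F$ and Hom-evaluation replaced by tensor-evaluation.
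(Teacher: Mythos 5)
Your proposal is correct and matches the paper's intent exactly: the paper proves this proposition "similarly to Proposition \ref{140313:2}," which is precisely the translation you carry out — Proposition \ref{140822:1}\eqref{140822:1c} for (i)$\Rightarrow$(ii), triviality for (ii)$\Rightarrow$(iii), and tensor-evaluation plus Lemma \ref{140821:1}\eqref{140821:1b} for (iii)$\Rightarrow$(i). The only nitpick is that the hypothesis of Fact \ref{141009:1} you need is $X\in\D_{-}(R)$ (the middle entry), not $\rhom_R(C,X)\in\D_{-}(R)$, but both hold here so nothing is affected.
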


\begin{prop}
\label{141007:2}
	Let $X\in \D_{\operatorname{b}}(R)$. The following conditions are equivalent:
	\begin{enumerate}[\rm(i)]
	\item $\icid_R(X)<\infty$;
	\item $\fcpd_R(\rhom_R(X,Y))<\infty$ for all $Y\in \D_{\operatorname{b}}(R)$ such that $\id_R(Y)<\infty$;
	\item $\fcpd_R(\rhom_R(X,E))<\infty$ for some faithfully injective $R$-module $E$.
	\end{enumerate}
\end{prop}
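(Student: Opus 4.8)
The plan is to mirror the proof of Proposition~\ref{140313:2}, running through the cycle $(i)\Rightarrow(ii)\Rightarrow(iii)\Rightarrow(i)$ with the roles of $\fcpd$ and $\icid$ interchanged. For $(i)\Rightarrow(ii)$, given $\icid_R(X)<\infty$ and $Y\in\D_{\operatorname{b}}(R)$ with $\id_R(Y)<\infty$, I would simply quote Proposition~\ref{140827:2}\eqref{140827:2a}, which yields $\fcpd_R(\rhom_R(X,Y))\leq\icid_R(X)+\sup(Y)<\infty$. For $(ii)\Rightarrow(iii)$, a faithfully injective $R$-module $E$ satisfies $\id_R(E)=0<\infty$, so the case $Y=E$ of $(ii)$ gives $\fcpd_R(\rhom_R(X,E))<\infty$.

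The content is in $(iii)\Rightarrow(i)$. Starting from a faithfully injective $E$ with $\fcpd_R(\rhom_R(X,E))<\infty$, Definition~\ref{140226:1}\eqref{140226:1b} gives $\fd_R(\rhom_R(C,\rhom_R(X,E)))<\infty$, and Hom-tensor adjointness rewrites $\rhom_R(C,\rhom_R(X,E))\simeq\rhom_R(C\lotimes_R X,E)$, so $\fd_R(\rhom_R(C\lotimes_R X,E))<\infty$. In particular $\rhom_R(C\lotimes_R X,E)\in\D_{\operatorname{b}}(R)$; since $\Hom_R(-,E)$ is exact and faithful, $\HH_i(\rhom_R(C\lotimes_R X,E))\cong\Hom_R(\HH_{-i}(C\lotimes_R X),E)$ vanishes exactly when $\HH_{-i}(C\lotimes_R X)$ does, whence $C\lotimes_R X\in\D_{\operatorname{b}}(R)$. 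It then suffices to deduce $\id_R(C\lotimes_R X)<\infty$ from $\fd_R(\rhom_R(C\lotimes_R X,E))<\infty$, for then $\icid_R(X)=\sup(C)+\id_R(C\lotimes_R X)<\infty$.

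I expect the only real obstacle to be that last implication: Lemma~\ref{140821:1}\eqref{141007:1a} provides $\id_R(Z)<\infty\Rightarrow\fd_R(\rhom_R(Z,E))<\infty$, while here I need its converse. I would supply it as in the proof of Lemma~\ref{140821:1}: writing $Z=C\lotimes_R X\in\D_{\operatorname{b}}(R)$ and taking any ideal $\fa\subseteq R$, Hom-evaluation~(\ref{141009:1}) applies (as $E\in\cati(R)$ and $R/\fa\in\D^{\operatorname{f}}_{+}(R)$) and gives $R/\fa\lotimes_R\rhom_R(Z,E)\simeq\rhom_R(\rhom_R(R/\fa,Z),E)$, hence $\tor^R_i(R/\fa,\rhom_R(Z,E))\cong\Hom_R(\ext^i_R(R/\fa,Z),E)$; faithfulness of $\Hom_R(-,E)$ makes the vanishing of the left-hand side for all $i>n$ equivalent to the vanishing of $\ext^i_R(R/\fa,Z)$ for all $i>n$, and the standard bounded criteria for flat and injective dimension \cite{avramov:hdouc} then give $\fd_R(\rhom_R(Z,E))\leq n\iff\id_R(Z)\leq n$, applied with $Z=C\lotimes_R X$. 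With that in hand, every remaining step is a verbatim transcription of the proof of Proposition~\ref{140313:2}.
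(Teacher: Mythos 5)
Your proof is correct and follows essentially the same route the paper intends, namely a verbatim mirror of the proof of Proposition~\ref{140313:2} with Hom-tensor adjointness playing the role of Hom-evaluation in the step (iii)$\Rightarrow$(i). You are also right to flag that the implication $\fd_R(\rhom_R(Z,E))\leq n\Rightarrow\id_R(Z)\leq n$ is not literally among the four parts of Lemma~\ref{140821:1} (it is the converse of part~\eqref{141007:1a}), and the argument you supply for it via Hom-evaluation, the identification $\tor^R_i(R/\fa,\rhom_R(Z,E))\cong\Hom_R(\ext^i_R(R/\fa,Z),E)$, and faithfulness of $\Hom_R(-,E)$ is a valid way to close that small gap.
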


\begin{prop}
\label{141008:1}
	Let $X\in \D_{\operatorname{b}}(R)$. The following conditions are equivalent:
	\begin{enumerate}[\rm(i)]
	\item $\icid_R(X)<\infty$;
	\item $\icid_R(X\lotimes_R Y)<\infty$ for all $Y\in \D_{\operatorname{b}}(R)$ such that $\fd_R(Y)<\infty$;
	\item $\icid_R(X\lotimes_R F)<\infty$ for some faithfully flat $R$-module $F$.
	\end{enumerate}
\end{prop}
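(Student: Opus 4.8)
The plan is to mirror the proof of Proposition~\ref{140313:2}, establishing (i)$\Rightarrow$(ii)$\Rightarrow$(iii)$\Rightarrow$(i). The implication (i)$\Rightarrow$(ii) is immediate from Proposition~\ref{140827:2}\eqref{140827:2b}: if $\icid_R(X)<\infty$ and $Y\in\D_{\operatorname{b}}(R)$ has $\fd_R(Y)<\infty$, then $\icid_R(X\lotimes_R Y)\leq\icid_R(X)-\inf(Y)<\infty$, since $\inf(Y)>-\infty$. The implication (ii)$\Rightarrow$(iii) is trivial, as any faithfully flat $R$-module $F$ has $\fd_R(F)=0<\infty$, so (ii) applied to $Y=F$ gives $\icid_R(X\lotimes_R F)<\infty$.

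The substance is (iii)$\Rightarrow$(i). First unwind the definitions: by Remark~\ref{140623:5}, condition (iii) says $\id_R\bigl(C\lotimes_R(X\lotimes_R F)\bigr)<\infty$; associativity of the derived tensor product gives $C\lotimes_R(X\lotimes_R F)\simeq(C\lotimes_R X)\lotimes_R F$, and finiteness of this injective dimension places $(C\lotimes_R X)\lotimes_R F$ in $\D_{\operatorname{b}}(R)$, whence $C\lotimes_R X\in\D_{\operatorname{b}}(R)$ by faithful flatness (using $\operatorname{H}_i(M\lotimes_R F)\cong\operatorname{H}_i(M)\otimes_R F$). Since $\sup(C)<\infty$, it now suffices, by Remark~\ref{140623:5}, to prove that $\id_R\bigl((C\lotimes_R X)\lotimes_R F\bigr)<\infty$ forces $\id_R(C\lotimes_R X)<\infty$; that is, finite injective dimension must be shown to descend along the faithfully flat module $F$.

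I would obtain this descent by routing through Proposition~\ref{141007:2}. Applying its implication (i)$\Rightarrow$(iii) to the complex $X\lotimes_R F\in\D_{\operatorname{b}}(R)$ produces a faithfully injective $R$-module $E$ with $\fcpd_R\bigl(\rhom_R(X\lotimes_R F,E)\bigr)<\infty$. Hom-tensor adjointness, together with $\rhom_R(F,E)\simeq\hom_R(F,E)$ (valid since $F$ is flat and $E$ is injective), gives $\rhom_R(X\lotimes_R F,E)\simeq\rhom_R(X,\hom_R(F,E))$, and $E':=\hom_R(F,E)$ is again a faithfully injective $R$-module, because $\hom_R(-,E')\cong\hom_R(-\otimes_R F,E)$ is a composite of faithful exact functors. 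Hence $\fcpd_R\bigl(\rhom_R(X,E')\bigr)<\infty$ with $E'$ faithfully injective, and implication (iii)$\Rightarrow$(i) of Proposition~\ref{141007:2}, applied to $X$, gives $\icid_R(X)<\infty$, as desired. (One can instead argue the descent directly, in the style of Lemma~\ref{140821:1}: by tensor-evaluation (Fact~\ref{141009:1}), $\rhom_R(R/\p,C\lotimes_R X)\lotimes_R F\simeq\rhom_R\bigl(R/\p,(C\lotimes_R X)\lotimes_R F\bigr)$ for every $\p\in\spec(R)$, so $\ext_R^i(R/\p,C\lotimes_R X)\otimes_R F\cong\ext_R^i\bigl(R/\p,(C\lotimes_R X)\lotimes_R F\bigr)$; the latter vanishes for $i$ exceeding $\id_R((C\lotimes_R X)\lotimes_R F)$, so the former does too by faithful flatness, and the Bass/Ext characterization of injective dimension over the noetherian ring $R$ yields the conclusion, see \cite{avramov:hdouc}.)

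The crux is this descent of finite injective dimension. For \emph{flat} dimension the analogous statement, Lemma~\ref{140821:1}\eqref{140821:1b}, follows from a soft truncation argument on a flat resolution tensored with $F$; but tensoring an \emph{injective} resolution with $F$ need not produce injective modules, so the injective version has no such elementary proof and one genuinely has to pass either through a faithfully injective cogenerator (as above, via Proposition~\ref{141007:2}) or through the Bass/Ext characterization of injective dimension. The remaining ingredients---the two invocations of Remark~\ref{140623:5}, associativity of $\lotimes$, faithful-flat descent of homological boundedness, and the adjunction isomorphisms---are routine.
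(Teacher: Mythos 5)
Your proof is correct, and your (i)$\Rightarrow$(ii)$\Rightarrow$(iii) coincides with what the paper intends (the paper proves this proposition only via the remark that it is ``proven similarly to Proposition~\ref{140313:2}''). For (iii)$\Rightarrow$(i) you take a genuinely different route: the paper's template would argue directly, writing $C\lotimes_R(X\lotimes_R F)\simeq(C\lotimes_R X)\lotimes_R F$ and then descending finite injective dimension along the faithfully flat $F$, whereas you reduce to Proposition~\ref{141007:2} by passing to the faithfully injective module $E'=\hom_R(F,E)$; that reduction is clean and valid (and your parenthetical Ext-characterization argument is essentially the direct descent the paper has in mind). You are also right to flag that the needed descent statement --- $F$ faithfully flat and $\id_R(M\lotimes_R F)\le n$ imply $\id_R(M)\le n$ --- is not literally among the four parts of Lemma~\ref{140821:1}, which give flat \emph{descent} and injective \emph{ascent} only, so some supplement is genuinely required; either of your two arguments supplies it. One small inaccuracy in your closing discussion: since $R$ is noetherian, tensoring an injective module with a flat module \emph{does} yield an injective module, so the truncation argument of Lemma~\ref{140821:1} in fact adapts (tensor a bounded injective resolution with $F$ and descend injectivity of the relevant cocycle module via Baer's criterion and tensor evaluation); this does not affect the correctness of your proof, only your claim that no elementary proof of the descent exists.
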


\begin{cor}
\label{140311:2}
	Let $X\in \mathcal{D}_{\rm{b}}(R)$ and. If there exists a dualizing complex $D$ and $\fcpd_R(X)<\infty$, then  $\icid_R(X^{\dagger})<\infty$ where $X^{\dagger} = \rhom_R(X,D)$.
\end{cor}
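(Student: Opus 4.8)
The plan is to obtain this as an immediate application of Proposition~\ref{140313:2}. The key point is that a dualizing complex $D$ is, by definition, a homologically finite complex of finite injective dimension; in particular $\id_R(D)<\infty$ and $D\in\D_{\mathrm{b}}(R)$, so $D$ is exactly the sort of complex to which condition~(ii) of Proposition~\ref{140313:2} applies.

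Concretely, I would argue as follows. Since $\fcpd_R(X)<\infty$ by hypothesis, Proposition~\ref{140313:2} gives the equivalence of~(i) with~(ii), so condition~(ii) holds for $X$: for every $Y\in\D_{\mathrm{b}}(R)$ with $\id_R(Y)<\infty$ one has $\icid_R(\rhom_R(X,Y))<\infty$. Specializing to $Y=D$ --- which is legitimate precisely because $\id_R(D)<\infty$ --- yields $\icid_R(\rhom_R(X,D))<\infty$, that is, $\icid_R(X^{\dagger})<\infty$, as claimed. The homological boundedness of $X^{\dagger}=\rhom_R(X,D)$ needed for $\icid_R(X^{\dagger})$ to be defined is already built into condition~(ii), and in any case follows from $\id_R(D)<\infty$ together with $X\in\D_{\mathrm{b}}(R)$.

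I do not expect a genuine obstacle here: the substance has been front-loaded into Proposition~\ref{140313:2}, whose proof in turn rests on Hom-evaluation (Fact~\ref{141009:1}) and Lemma~\ref{140821:1}. The one subtlety worth flagging is to invoke part~(ii) of Proposition~\ref{140313:2} rather than part~(iii): a dualizing complex need not be a (faithfully injective) module, but it does have finite injective dimension, which is all that~(ii) demands.
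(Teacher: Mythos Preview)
Your proposal is correct and matches the paper's own proof: both argue that since a dualizing complex $D$ has finite injective dimension, the result follows directly from Proposition~\ref{140313:2} (via the implication (i)$\Rightarrow$(ii) with $Y=D$). Your added remarks about boundedness of $X^\dagger$ and the need to use part~(ii) rather than~(iii) are sound clarifications, but the core argument is identical.
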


\begin{proof}
	Since $D$ is a dualizing complex, it has finite injective dimension. Therefore the result follows from Proposition \ref{140313:2}.
\end{proof}

The last result of this paper establishes Theorem \ref{141117:1} from the introduction.

\begin{thm}\label{140503:1}
	Assume $R$ has a dualizing complex $D$ and let $X\in \mathcal{D}_{\rm{b}}(R)$. Then $\icid_R(X)<\infty$ if and only if $\mathcal{F}_{C^{\dagger}}\text{-}\pd_R(X)<\infty$ where $C^{\dagger} = \rhom_R(C,D)$.
\end{thm}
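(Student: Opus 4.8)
The plan is to read each side of the asserted equivalence through Theorem~\ref{141111:6} applied to $C^{\dagger}$: from one hypothesis I will manufacture, via an evaluation isomorphism, a complex that witnesses the other hypothesis. First I would record the facts I need about $C^{\dagger}=\rhom_R(C,D)$. Since $D$ is dualizing and $C\in\D_{\operatorname{b}}^{\operatorname{f}}(R)$, the complex $C^{\dagger}$ is again a semidualizing $R$-complex, biduality with respect to $D$ gives $\rhom_R(C^{\dagger},D)\simeq C$, and $\id_R(D)<\infty$ by definition of a dualizing complex. These three facts are exactly what let me invoke Theorem~\ref{141111:6}, Fact~\ref{141009:1}, and Fact~\ref{141121:1} with $C^{\dagger}$ in the role of $C$.

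For the forward implication, assume $\icid_R(X)<\infty$. By Theorem~\ref{141111:6}\eqref{141111:6c} there is $Y\in\D_{\operatorname{b}}(R)$ with $\id_R(Y)<\infty$ and $X\simeq\rhom_R(C,Y)$. Rewriting $C\simeq\rhom_R(C^{\dagger},D)$ and applying Hom-evaluation (Fact~\ref{141009:1}; its hypotheses hold because $C^{\dagger}\in\D^{\operatorname{f}}_{+}(R)$, $D\in\D_{\operatorname{b}}(R)$, and $Y\in\cati(R)$) I obtain
\[
  X\simeq\rhom_R\bigl(\rhom_R(C^{\dagger},D),Y\bigr)\simeq C^{\dagger}\lotimes_R\rhom_R(D,Y).
\]
Fact~\ref{141121:1}\eqref{141121:1a} bounds $\fd_R(\rhom_R(D,Y))$ by $\id_R(D)+\sup(Y)<\infty$, so $\rhom_R(D,Y)\in\catf(R)$, and Theorem~\ref{141111:6}\eqref{141111:6b} applied to $C^{\dagger}$ (with witness $W=\rhom_R(D,Y)$) gives $\mathcal{F}_{C^{\dagger}}\text{-}\pd_R(X)<\infty$.

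For the reverse implication, assume $\mathcal{F}_{C^{\dagger}}\text{-}\pd_R(X)<\infty$. By Theorem~\ref{141111:6}\eqref{141111:6b} applied to $C^{\dagger}$ there is $W\in\D_{\operatorname{b}}(R)$ with $\fd_R(W)<\infty$ and $X\simeq C^{\dagger}\lotimes_R W=\rhom_R(C,D)\lotimes_R W$. Tensor-evaluation (Fact~\ref{141009:1}; its hypotheses hold because $C\in\D^{\operatorname{f}}_{+}(R)$, $D\in\D_{-}(R)$, and $W\in\catf(R)$) yields $X\simeq\rhom_R(C,D\lotimes_R W)$. By Fact~\ref{141121:1}\eqref{141121:1b}, $\id_R(D\lotimes_R W)\leq\id_R(D)-\inf(W)<\infty$, so Theorem~\ref{141111:6}\eqref{141111:6c} gives $\icid_R(X)<\infty$.

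I expect the only genuinely non-routine point to be the opening observation that $C^{\dagger}$ is semidualizing with $\rhom_R(C^{\dagger},D)\simeq C$ (standard, \cite{christensen:scatac}), since this is precisely what makes Theorem~\ref{141111:6} available in the $C^{\dagger}$-direction; the remainder is bookkeeping, whose only subtlety is to check that the finiteness of $\id_R(Y)$, respectively $\fd_R(W)$, is exactly the hypothesis that turns the relevant Hom- or tensor-evaluation morphism into an isomorphism.
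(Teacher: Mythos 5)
Your proposal is correct and follows essentially the same route as the paper: both directions extract a witness of finite injective (resp.\ flat) dimension, rewrite $C\simeq\rhom_R(C^{\dagger},D)$ (resp.\ $C^{\dagger}=\rhom_R(C,D)$), apply Hom-evaluation (resp.\ tensor-evaluation), and bound the resulting dimension via Fact~\ref{141121:1}. The only cosmetic difference is that you invoke Theorem~\ref{141111:6} to produce the witnesses where the paper sets $J=C\lotimes_R X$ and cites Remark~\ref{140623:5} for the Auslander-class membership, which amounts to the same thing.
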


\begin{proof}
	For the forward implication assume that $\icid_R(X)<\infty$. Then set $J = C\lotimes_R X$. Since $\icid_R(X)<\infty$ we have that $J$ has finite injective dimension. By Remark \ref{140623:5} we have $X\in\catac(R)$. This explains the first isomorphism in the following display:
	\[
		X\simeq \rhom_R(C,J) \simeq \rhom_R(\rhom_R(C^{\dagger},D),J) \simeq C^{\dagger}\lotimes_R \rhom_R(D,J).
	\]
	The second isomorphism is from the isomorphism $C\simeq C^{\dagger\dagger}$, and the third is by Hom-evaluation (\ref{141009:1}). Observe that since $\id_R(D)<\infty$ and $\id_R(J)<\infty$ we have that $\fd_R(\rhom_R(D,J))<\infty$ by Fact \ref{141121:1}\eqref{141121:1a}. Thus, it follows that $\mathcal{F}_{C^{\dagger}}\text{-}\fd_R(X)<\infty$ by the displayed isomorphisms.

	For the reverse implication assume that $\mathcal{F}_{C^{\dagger}}\text{-}\fd_R(X)<\infty$. Then we can write $X\simeq C^{\dagger}\lotimes_R F$ where $F= \rhom_R(C^{\dagger},X)$ and $\fd_R(F)<\infty$. We then have the following isomorphisms:
	\[
		X\simeq C^{\dagger}\lotimes_R F = \rhom_R(C,D)\lotimes_R F \simeq \rhom_R(C,D\lotimes_R F)
	\]
	where the second isomorphism is by tensor-evaluation (\ref{141009:1}). Since $\id_R(D)<\infty$ and $\fd_R(F)<\infty$ we have that $\id_R(D\lotimes_R F)<\infty$ by Fact \ref{141121:1}\eqref{141121:1b}. Hence $\icid_R(X)<\infty$ by Theorem \ref{141111:6}\eqref{141111:6c} as desired.
\end{proof}


\end{document}